\setlist{nolistsep}
\newtheorem{theorem}{Theorem}[section]
\newtheorem{lemma}[theorem]{Lemma}
\newtheorem{remark}{Remark}[section]
\newenvironment{manualtheorem}[1]{%
  \manualtheoreminner
}{\endmanualtheoreminner}
\newenvironment{manuallemma}[1]{%
  \manuallemmainner
}{\endmanuallemmainner}
\newcommand{\ignore}[1]{}
\def\defi{:=}
\def\diff{\mathop{}\!\mathrm{d}}
\title{{\bf Revenue Management Under the Markov Chain Choice Model with Joint Price and Assortment Decisions}}
\author{
{\bf Anton J. Kleywegt},  {\bf Hongzhang Shao} \\ 
School of Industrial and Systems Engineering \\
Georgia Institute of Technology \\
Atlanta, Georgia 30332-0205 \\
}
\date{
}
\begin{document}

\maketitle

\begin{abstract}
Finding the optimal product prices and product assortment are two fundamental problems in revenue management. 
Usually, a seller needs to jointly determine the prices and assortment while managing a network of resources with limited capacity. 
However, there is not yet a tractable method to efficiently solve such a problem. 
Existing papers studying static joint optimization of price and assortment cannot incorporate resource constraints. 
Then we study the revenue management problem with resource constraints and price bounds, where the prices and the product assortments need to be jointly determined over time. 
We showed that under the Markov chain (MC) choice model (which subsumes the multinomial logit (MNL) model), we could reformulate the choice-based joint optimization problem as a tractable convex conic optimization problem. 
We also proved that an optimal solution with a constant price vector exists even with constraints on resources. 
In addition, a solution with both constant assortment and price vector can be optimal when there is no resource constraint. 
\end{abstract}

\newpage



\section{Introduction}
\label{sec:introduction}

Finding the optimal product prices and product assortment are the two most fundamental problems in revenue management. 
In many applications, the seller needs to determine the product prices and product assortment \emph{jointly}. 
Usually, each product uses a specific combination of resources. 
Thus, the seller needs to optimally choose the prices and assortment to maximize his profit, market share, or revenue while considering the capacity limited on resources.
For example, consider an airline designing itineraries over a network of flight legs. 
Different itineraries correspond to different products and seats on different flight legs. 
The airline needs to first choose the itineraries' prices at the beginning of the planning horizon, then adjust their availability over time. 
In other scenarios such as e-commerce, both the prices and availability of products can be adjusted over time. 

Although such joint decision problems with resource constraints are pretty standard in practice, there is not yet an efficient method to solve them. 
At present, we have methods to efficiently solve: 
\begin{enumerate}[label=(\roman*)]
\item network revenue management problems where prices of products are exogeneous and cannot be controlled (see \cite{gallego2011general, feldman2017revenue}, etc.); 
\item pricing problems with resource constraints where products are assumed to be always available over time (see \cite{song2007demand, keller2014efficient, dong2019pricing}, etc.); 
\item static joint optimization problem of product prices and product assortment, but without resource constraints (see \cite{wang2012joint, wang2017joint}, etc.). 
\end{enumerate}
However, none of the existing efficient solution methods can be extended to solve the joint optimization problem of prices and assortment while incorporating resource constraints. 

This paper presents a tractable approach to solving the revenue maximization problem under the Markov chain (MC) choice model with resource constraints, where the product prices and product assortment need to be chosen simultaneously. 
We formulate both the static optimization problem (\ref{eqn:sp1}) and the choice-based optimization problem (\ref{eqn:cbp1}) and consider both resource constraints and upper and lower bounds of prices. 
We show that both problems can be solved with a convex conic reformulation. 
We then provide the necessary and sufficient conditions for an optimal solution to exist for each problem. 
For (\ref{eqn:cbp1}), we show that an optimal solution with a constant price vector exists even when there are constraints on resources. 
Thus, there is no need to re-adjust prices during the planning period. 
In addition, when there is no resource constraint, a solution with a constant assortment and price vector can be optimal. 

\vspace*{20pt}

\begin{table}[h]
\centering
{\scriptsize
\begin{tabular}{ l || l | l }
\hline
Problem Type:  & Without Resource Constraints & With Resource Constraints \\ 
 & (Static Optimization) & (Revenue Management with Limited Resources) \\
 & & \\
 \hline
 \hline
Joint Assortment and & \textbf{This Paper} (MC/MNL) & \textbf{This Paper} (MC/MNL) \\ 
Price Optimization & \cite{wang2017joint} (NL/MNL) & \\
 & \cite{wang2012joint} (MNL) & \\
 & * \cite{gallego2014constrained} (NL/MNL) & \\
 & * \cite{davis2013assortment} (MNL) & \\
 & & \\
 \hline
 \hline
Assortment Optimization & \textbf{This Paper} (MC/MNL) & \textbf{This Paper} (MC/MNL) \\ 
 & \cite{feldman2017revenue} (MC/MNL) & \cite{feldman2017revenue} (MC/MNL) \\
 & \cite{blanchet2016markov} (MC/MNL) & \cite{gallego2011general} (MNL) \\
 & & ... \\
 & \cite{wang2017joint} (NL/MNL) & \\
 & \cite{wang2012joint} (MNL) & \\
 & * \cite{gallego2014constrained} (NL/MNL) & \\
 & * \cite{davis2013assortment} (MNL) & \\
 & ... & \\
 \hline
Price Optimization  & \textbf{This Paper} (MC/MNL) & \textbf{This Paper} (MC/MNL) \\ 
 & \cite{dong2019pricing} (MC/MNL) & \cite{dong2019pricing} (MC/MNL) \\
 & \cite{keller2014efficient} (GAM/MNL) & \cite{keller2014efficient} (GAM/MNL) \\
 & \cite{li2011pricing} (NL/MNL) & \cite{li2011pricing} (NL/MNL) \\
 & & ... \\
 & \cite{wang2017joint} (NL/MNL) & \\
 & \cite{wang2012joint} (MNL) & \\
 & * \cite{gallego2014constrained} (NL/MNL) & \\
 & * \cite{davis2013assortment} (MNL) & \\
 & ... & \\ \hline
\end{tabular}
}
\vspace*{10pt}
\caption{Type of optimization problems covered in several papers. The * simbol indecates that a discretization of prices is needed. }
\label{tab:paper-compare}
\end{table}

Closest to our paper are \cite{feldman2017revenue} and \cite{dong2019pricing}. 
\cite{feldman2017revenue} studies both the static assortment planning problem and the choice-based network revenue management problem under the MC model. 
They provided both the reduced problem and the algorithm for recovering the optimal solution to the choice-based approximation problem. 
\cite{dong2019pricing} studied the pricing problem under the MC model and showed that the problem could be solved efficiently either through dynamic programming or by solving a convex program. 
Our paper is a generalization based on the two papers. 
The revenue management problem in \cite{feldman2017revenue} and pricing problem in \cite{dong2019pricing} are both special cases of problem (\ref{eqn:cbp1}). 
The MC model was first systematically examined by \cite{blanchet2016markov}. 
The MC model in our paper is similar to models considered in \cite{feldman2017revenue} and \cite{dong2019pricing}, adjusted to allow both assortment decisions and price changes. 
In addition, the conic reformulation method in this paper extends the reformulation method we developed in \cite{shao2020tractable}, which is established based on papers studying variable change methods to solve pricing problems. 

This paper is organized as follows: 
In Section \ref{sec:literature}, we review the related literature. 
In Section \ref{sec:model}, we introduce the Markov chain choice model used in this paper. 
In Section \ref{sec:joint}, we formulate the static joint optimization problem of product prices and assortment (\ref{eqn:sp1}). 
We show that an optimal solution to (\ref{eqn:sp1}) can be found by solving a convex conic program, and such an optimal solution always exists. 
In Section \ref{sec:network}, we extend the tractability result on (\ref{eqn:sp1}) to a choice-based network revenue management problem (\ref{eqn:cbp1}) with joint price and assortment decisions. 
We also show that allowing prices to change over time does not improve revenue. 
Thus, it is enough to use a constant price vector over time. 
We conclude in Section \ref{sec:conclusion}.


\section{Related Literature}
\label{sec:literature}

\emph{\underline{Assortment Planning \& Network Revenue Management}:} Consider a seller who manages a set of products. 
Different products have different profit margins and demands.
Usually, products are substitutable, and whether one product is offered affects the demands of other products. 
Thus, to maximize the total revenue, the seller must decide which subset of products to offer during a selling horizon. 
Such an optimization problem is usually called an assortment planning problem.
It is one of the most fundamental problems in modern choice-based revenue management literature. 

An essential extension to the assortment planning problem is the network revenue management problem, where the seller manages a network of resources, each of which has limited capacity. 
Each product uses a specific combination of resources. 
When the seller sells a product, he generates revenue and consumes the capacities of the resources used by the product. 
One common approach to solving the network revenue management problem is to formulate deterministic approximations, assuming that customer choices take on their expected values. 
Such an approximation is usually called a ``choice-based optimization problem''. 
It first appeared in \cite{gallego2004managing}, and was discussed by following papers such as \cite{liu2008choice} and \cite{bront2009column}. 
One obstacle in solving a choice-based optimization problem is that its number of decision variables grows exponentially as the number of products increases. 
To overcome this obstacle, \cite{gallego2011general} showed that the choice-based optimization problem could equivalently be reformulated as a much smaller ``sales-based optimization problem'' under the multinomial logit (MNL) choice model. 
\cite{feldman2017revenue} then extended this result to the choice-based optimization problem under the Markov chain (MC) choice model (which subsumes the MNL model). 
We refer the readers to \cite{strauss2018review} for an in-depth review of literature on choice-based revenue management. 

\emph{\underline{Price Optimization}:} Another fundamental problem in choice-based revenue management is the pricing problem. 
Usually, the price of a product affects the profit of the seller in two ways: 
(1)~The price affects the demand for the product (and the demand for other products).
(2)~The price affects the revenue and cost per unit product sold, and thus the profit margin. 
Both effects can be captured by a choice model with parameters that may depend on the product. 
However, the pricing problem is hard to solve under most choice models in its original form. 
For instance, \cite{hanson1996optimizing} studied the problem of maximizing the expected revenue under the multinomial logit (MNL) model. 
They observed that the expected profit is not concave in price. 
Since then, researchers have been actively developing efficient solution methods for pricing problems. 
Many tractable approaches have been introduced in the last two decades. 

A critical approach to solving the pricing problems efficiently is to perform a change in variables. 
Early papers along this line are \cite{song2007demand} and \cite{dong2009dynamic}, who considered pricing and inventory control problems under the multinomial logit (MNL) model and showed that the expected revenue is concave in the market shares. 
In order words, the pricing problem under the MNL model can be solved efficiently by using the market shares of the products as decision variables. 
(Both works assumed that all products' price sensitivity parameters are the same.) 
\cite{li2011pricing} extended the concavity results to the nested logit (NL) model (which subsumes the MNL model) and showed that the expected profit is concave in the market shares if (1) the price-sensitivity parameters are identical for all the products within a nest, and (2) the nest coefficients are restricted to be in the unit interval. 
As a special case, the expected profit under a general MNL model (with asymmetric price-sensitivity parameters) is concave in market shares. 
\cite{keller2014efficient} discussed similar concavity results under the general attraction demand model (GAM, which also subsumes the MNL model) and showed that constraints such as price bounds and price ladders could be added to the pricing problem as linear constraints in the market shares. 
\cite{zhang2018multiproduct} discussed the multi-product pricing problem under the generalized extreme value (GEV) models (which subsumes the NL model). 
They showed that the problem could be formulated as a convex program with homogeneous price sensitivity parameters. 
Recently, \cite{dong2019pricing} formulate the pricing problem under the Markov chain (MC) choice model. 
They showed that the pricing problem could be solved as a dynamic program. 
They also presented a market-share-based approach to solving the dynamic pricing problem with a single resource.

Besides the variable change approach, there are two other approaches. One approach to solving the pricing problems under the MNL and NL model is to study the first-order optimality condition. 
\cite{anderson1992multiproduct}, \cite{besanko1998logit} and \cite{aydin2000product} observed that under an MNL model with homogeneous price sensitivity parameters, the profit margin (price minus cost) is constant across all the products at the optimality of the expected profit. 
\cite{aydin2008joint} and \cite{akcay2010joint} then pointed out that under such scenarios, the profit function is uni-modal concerning the markup. Its unique optimal solution can thus be found by solving the first-order conditions. 
\cite{gallego2014multiproduct} extended these results and showed that under the general NL model, the adjusted markup (price minus cost minus the reciprocal of price sensitivity) is constant for all the products within a nest at optimality. 
They then defined an adjusted nest-level markup and showed it invariant for all nests, reducing the pricing problem to a single-dimensional optimization problem. 
This one-dimensional problem has a local maximum when (1) the difference between price sensitivity parameters is bounded by a particular value, or (2) the dissimilarity parameter is greater than 1. 
These conditions are more general than \cite{li2011pricing}. 
(\cite{wang2012joint} also showed the ``constant adjusted markup'' property for pricing problems under the MNL model.) 
\cite{li2015d} further extended this result to the pricing problem under the multi-level nested logit model. 
They showed that the problem could be reduced to single-dimensional, and the optimal solution is unique under generalized conditions. 
\cite{zhang2018multiproduct} discussed the multi-product pricing problem under the generalized extreme value models with homogeneous price sensitivity parameters and provided similar results. 

The other approach limits price candidates to a small, finite set. 
For instance, \cite{davis2013assortment} studied assortment planning problems under the MNL model with totally uni-modular constraints and showed that such problems could be solved efficiently as linear programs. 
One special case of their optimization problem is the pricing problem with a finite price menu. Specifically, they make several copies of a product, each with a different price. They then use a cardinality constraint (a totally uni-modular constraint) to ensure that exactly one out of these copies will be chosen. 
\cite{davis2013assortment} also showed that in such a pricing problem, the quality-consistent constraints (price ladders) could be imposed as totally uni-modular constraints. 
\cite{gallego2014constrained} then showed that assortment planning problems under the NL model with cardinality constraints could be solved efficiently by solving a linear program. Thus, it is also possible to efficiently solve the pricing problem under the NL model with finite price candidates. 
However, it was not shown that their method could be extended to quality-consistent constraints or totally uni-modular constraints in general. 
\cite{davis2017pricing} then followed a different, fixed-point-based approach to solve the pricing problem under the NL model with quality consistency constraints and showed that the problem could be solved by solving a linear program. 

\emph{\underline{Joint Optimization of Prices and Assortment}:} 
While our method is related to the variable-change approach for pricing problems, existing approaches to solve the joint optimization of price and assortment are established based on the other two methods mentioned above. 
For example, \cite{wang2012joint} studied the joint optimization of assortment selection and pricing under the multinomial logit (MNL) choice model and showed that the problem could be efficiently solved by finding the fixed point of a single-dimensional function. 
\cite{wang2017joint} then extended the tractability result to the joint optimization problem of assortment and price under the tree logit model. 
Both papers rely on properties that were established upon the first-order conditions. Thus, while they can work with cardinality constraints, their methods cannot incorporate other typical constraints, such as resource constraints and price bounds. 

Another existing approach to solving the joint optimization problems is to limit price candidates to a small finite set and reduce the problems to classical assortment optimization problems. 
For instance, \cite{davis2013assortment} studied assortment planning problems under the MNL model with totally uni-modular constraints and showed that such problems could be solved efficiently as linear programs. 
One special case of their optimization problem is the pricing problem with a finite price menu. Specifically, they make several copies of a product, each with a different price. They then use a cardinality constraint (which is a totally uni-modular constraint) to ensure that exactly one out of these copies will be chosen. 
\cite{gallego2014constrained} then showed that assortment planning problems under the NL model with cardinality constraints could be solved efficiently by solving a linear program. 
Thus, it is also possible to efficiently solve the joint optimization problem under the NL model. 
Again, it was not shown that any of the methods above could be extended to incorporate resource constraints. 

Besides, there are several downsides to discretizing prices: 
(1) Recall that our approach guarantees that there is an optimal solution with a constant price over time. 
Such a property is very practical as it is usually much harder to frequently change the prices than to change the availabilities of products. 
After discretizing prices, however, there is no guarantee that the solution will have constant prices for products (see Remark \ref{rmk:const-price}). 
(2) Discretizing prices makes the problem larger and thus harder to solve. 
For example, if we create ten price candidates (which is a small number) for each product, then the scale of the problem becomes (at least) ten times larger. 
A ride-sharing service operator may need to update prices for millions of location pairs every a few minutes and may need the pricing problem to be solved within seconds. 
Making the problem ten times larger can be fatal under this scenario. 
(3) It is also questionable how these price candidates should be selected in advance, especially without knowing where better prices may fall. 
Thus, our method also provides a good alternative approach to solving the static joint optimization problem that overcomes the downsides above. 

\begin{remark}
\label{rmk:const-price}
As an example that prices may not stay constant over time after discretizing prices, consider an assortment of one product with two price candidates. 
The first price is relatively lower but leads to higher demand. 
The second price is relatively higher but leads to low demand. 
Offering either price is very profitable. 
However, we do not have enough resources if we offer the first price throughout the planning period. 
Thus, the optimal solution is to offer each price for a fraction of the time. 
(Note that, by the concavity of the pricing problem (see Section \ref{sec:network}), the single optimal price should be in between the two prices above.)
\end{remark}


\section{Markov Chain Choice Model}
\label{sec:model}

Consider a set of $J$ products, denoted by $\mathcal{J}$. 
Each product $j \in \mathcal{J}$ has a price $x_{j}$, the value of which can be chosen by the seller.
The price of a product can affect the profit of the seller in two ways:
(1)~The price affects the demand for the product (and the demand for other products). 
(2)~The price affects the seller's revenue, and thus profit margin, per unit product sold. 
Let $x \defi (x_{j} : \ j \in \mathcal{J})$. 
In this paper, we assume that $x_{j}$ has to be chosen within a closed interval $[\underline{x}_{j}, \overline{x}_{j}]$, as demand models are usually calibrated with a limited range of attribute-value data. It would be unwise to select attribute values much outside this range. 

The seller also determines which products it offers to customers. Let $\mathcal{A} \subseteq \mathcal{J}$ denote the assortment the seller chooses to offer. 
Given an offered assortment $\mathcal{A}$ and a price vector $x$, the probability that a product $j$ is chosen by a customer, denoted by $P_{j}^{\mathcal{A}}(x)$, is specified by a choice model with parameters that may depend on the product.
In this paper, we let the choice probabilities $P_{j}^{\mathcal{A}}(x)$ be given by a Markov chain choice model. 
The model here is similar to models considered in \cite{feldman2017revenue} and \cite{dong2019pricing}, adjusted to make provision for both the assortment decision and the price attributes.
The choice probabilities are given by
\begin{align*}
P_{j}^{\mathcal{A}}(x) 
		& \ \ = \ \ \mathbf{1}(j \in \mathcal{A}) Q_{j}(x_{j}) V_{j}^{\mathcal{A}}(x) 
		& j \in \mathcal{J}
\end{align*}
where $V^{\mathcal{A}}(x) = (V_{j}^{\mathcal{A}}(x) \, : \, j \in \mathcal{J})$ is the \emph{unique} solution to the system of equations
\begin{align*}
V_{j}^{\mathcal{A}}(x) 
		& \ \ = \ \ \theta_{j} + \sum_{i \in \mathcal{A}} 
		\left[1 - \mathbf{1}(i \in \mathcal{A}) Q_{i}(x_{i})\right] 
		\rho_{ij} V_{i}^{\mathcal{A}}(x) 
		& j \in \mathcal{J}
\tag{$\mathsf{Balance}$}
\label{eqn:balance}
\end{align*}
in which the parameters $\theta \defi (\theta_{j} : \ j \in \mathcal{J})$, $\rho \defi (\rho_{ij} : \ i,j \in \mathcal{J})$ and functions $(Q_{j}(x_{j}) : \ j \in \mathcal{J})$ satisfy the following assumptions:
\begin{enumerate}
\item
$\rho \geq 0$, $\theta \geq 0$ and $\sum_{j \in \mathcal{A}} \theta_{j} = 1$; 
\item
$I - \rho$ is nonsingular ($I$ is an identity matrix with proper dimensionality); 
\item
$Q_{j}(x_{j}) \in [0,1]$ for all any $x_{j} \in [\underline{x}_{j}, \overline{x}_{j}]$.
\end{enumerate}

\begin{remark}
\label{rmk:balance-unique}
Note that the three assumptions here are sufficient to guarantee that the (\ref{eqn:balance}) equations have a unique solution. 
First, $(I - \rho)^{-1} = I + \sum_{n=1}^{\infty} (\rho)^{n}$, which means $I - \rho$ is non-singular if and only if $\sum_{n=1}^{\infty} (\rho)^{n} < \infty$.
Second, let 
\begin{align*}
(\rho')_{ij}^{\mathcal{A}}(x) 
		& \ \ \defi \ \ \left[1 - \mathbf{1}(i \in \mathcal{A}) 
		Q_{i}(x_{i})\right] \rho_{ij}
		& i,j \in \mathcal{J} \\
(\rho')^{\mathcal{A}}(x) 
		& \ \ \defi \ \ ((\rho')_{ij}^{\mathcal{A}}(x) \, : \, i,j \in \mathcal{J})
\end{align*}
Since $Q_{j}(x_{j}) \in [0,1]$ for every $j \in \mathcal{J}$, we have 
\begin{align*}
& \sum_{n=1}^{\infty} ((\rho')^{\mathcal{A}}(x))^{n} 
		\ \ \leq \ \ \sum_{n=1}^{\infty} (\rho)^{n} 
		\ \ < \ \ \infty \\
\implies \quad
& \sum_{n=1}^{\infty} ((\rho')^{\mathcal{A}}(x)^{\top})^{n} 
		\ \ < \ \ \infty 
\end{align*}
Thus, the inverse of $I - (\rho')^{\mathcal{A}}(x)^{\top}$ exists, and
the (\ref{eqn:balance}) equations has a solution
\begin{align*}
V^{\mathcal{A}}(x) 
		\ \ = \ \ \left(I - (\rho')^{\mathcal{A}}(x)^{\top}\right)^{-1} \theta
\end{align*}
that is unique. 
\end{remark}

\begin{remark}
\cite{dong2019pricing} provided the following interpretation to the MC model:
Each arriving customer first visits product $j$ with probability $\theta_{j}$.
A customer visiting product $j$ purchases the product with probability $\mathbf{1}(j \in \mathcal{J}) Q_{j}(x_{j})$, or transition from the product with probability $1 - \mathbf{1}(j \in \mathcal{J}) Q_{j}(x_{j})$.
When a customer choose to transition from product $j$, she either transition to another product $i$ with probability $\rho_{ji}$, or transition to the no purchase option (and leaves the system) with probability $1 - \sum_{i \in \mathcal{A}} \rho_{ji}$.
In this way, each arriving customer transitions between different products until purchasing one of the products or deciding to leave without making a purchase.
Thus, $V_{j}^{\mathcal{A}}(x)$ gives the expected number of times that a customer visits product $j$.

\noindent However, as \cite{dong2019pricing} stated, we should not view the MC model as a faithful model of the thought process of the customers when they purchase a product. The value of the MC model comes from the facts that (1) this model is compatible with the random utility maximization principle (see Theorem 1 in \cite{dong2019pricing}), (2) its parameter values can be set so that the purchase probabilities under it become identical to the generalized attraction model, which subsumes the MNL model (see Lemma 2 in \cite{dong2019pricing}), and (3) it yields tractable optimization problems (see \cite{blanchet2016markov}, \cite{feldman2017revenue} and \cite{dong2019pricing}). 
\end{remark}

In this section, we let
\begin{align*}
Q_{j}(x_{j}) 
		& \ \ \defi \ \ \exp\left( \alpha_{j} - \beta_{j} x_{j} \right)
		& j \in \mathcal{J}
\end{align*}
We assume that $\beta_{j} > 0$ for each $j \in \mathcal{J}$. A higher price (and thus a higher profit margin) decrease customers' willingness to purchase the product and reduces the demand.


\section{Joint Optimization of Price and Assortment}
\label{sec:joint}
In this section, we study the static problem of maximizing the expected profit per customer arrival by controlling product prices and assortment selection. 
For each $j \in \mathcal{J}$, let $\psi_{j}$ denote the unit cost of product $j$. 
The resulting static optimization problem is
\begin{subequations}
\begin{align}
\max_{v, \ x, \ a} \quad
	& \sum_{j \in \mathcal{J}} \left(x_{j} - \psi_{j}\right) 
	a_{j} \exp\left( \alpha_{j} - \beta_{j} x_{j} \right) v_{j}
\tag{$\mathsf{SP}_1$}
\label{eqn:sp1} \\
\text{s.t.} \quad
& v_{j} \ \ = \ \ 
		\theta_{j} + \sum_{i \in \mathcal{J}} 
		\left[1 - a_{i} \exp\left( \alpha_{i} - \beta_{i} x_{i} \right) \right] \rho_{ij} v_{i}
		& \forall \ j \in \mathcal{J} \nonumber \\
& a_{j} \ \ \in \ \ \{0,1\}
		& \forall \ j \in \mathcal{J} \nonumber \\
& x_{j} \ \ \leq \ \ \overline{x}_{j}
		& \forall \ j \in \mathcal{J} \nonumber \\
& x_{j} \ \ \geq \ \ \underline{x}_{j}
		& \forall \ j \in \mathcal{J} \nonumber 
\end{align}
\end{subequations}
where $v \defi (v_{j} : \ j \in \mathcal{J})$ and $a \defi (a_{j} : \ j \in \mathcal{J})$. The first constraint in (\ref{eqn:sp1}) is consistent with the (\ref{eqn:balance}) equations, except the indicator functions $\mathbf{1}(j \in \mathcal{J})$ are replaced by binary decision variables $a_{j}$. Here $a_{j}=1$ indicates that product $j$ is in the chosen assortment, and is available to be purchased. Otherwise, $a_{j}=0$. Thus, $a_{j} \exp\left( \alpha_{j} - \beta_{j} x_{j} \right) v_{j}$ gives the overall probability that product $j$ is purchased by a customer.

Problem (\ref{eqn:sp1}) is not a convex optimization problem.
One of the reasons is that $a_{j}$ are binary variables.
Also, the first equality constraint is nonlinear. 
We will (1) relax $a_{j}$ into continuous variables, show that an optimal solution of the relaxation provides an optimal solution of (\ref{eqn:sp1}), (2) reformulate the relaxation as a convex optimization problem, and (3) then provide necessary and sufficient conditions for this convex optimization problem to have an optimal solution. 

First, we relax variable $a$ to obtain the following optimization problem: 
\begin{subequations}
\begin{align}
\max_{v, \ x, \ a} \quad
	& \sum_{j \in \mathcal{J}} \left(x_{j} - \psi_{j}\right) 
	a_{j} \exp\left( \alpha_{j} - \beta_{j} x_{j} \right) v_{j}
\tag{$\mathsf{SP}_2$}
\label{eqn:sp2} \\
\text{s.t.} \quad
& v_{j} \ \ = \ \ 
		\theta_{j} + \sum_{i \in \mathcal{J}} 
		\left[1 - a_{i} \exp\left( \alpha_{i} - \beta_{i} x_{i} \right) \right] \rho_{ij} v_{i}
		& \forall \ j \in \mathcal{J} \nonumber \\
& a_{j} \ \ \in \ \ [0,1]
		& \forall \ j \in \mathcal{J} \nonumber \\
& x_{j} \ \ \leq \ \ \overline{x}_{j}
		& \forall \ j \in \mathcal{J} \nonumber \\
& x_{j} \ \ \geq \ \ \underline{x}_{j}
		& \forall \ j \in \mathcal{J} \nonumber 
\end{align}
\end{subequations}
We have: 

\begin{lemma}
\label{lem:sp2 integral}
If $(v^*, x^*, a^*)$ is an optimal solution to (\ref{eqn:sp2}), then $a^*$ is integral, and $(v^*, x^*, a^*)$ is also optimal to (\ref{eqn:sp1}). 
\end{lemma}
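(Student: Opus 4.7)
The plan is to observe that $(\ref{eqn:sp2})$ is a continuous relaxation of $(\ref{eqn:sp1})$ and then to show that its objective, viewed as a function of a single coordinate $a_j$ with all other variables held fixed, is monotone on $[0,1]$. This monotonicity will let me replace each fractional $a_j^\ast$ by an endpoint without decreasing the objective, producing an integral optimal solution that is feasible, hence optimal, for $(\ref{eqn:sp1})$. Every $(\ref{eqn:sp1})$-feasible solution is automatically $(\ref{eqn:sp2})$-feasible, so $\mathrm{opt}(\ref{eqn:sp2}) \geq \mathrm{opt}(\ref{eqn:sp1})$; the monotonicity argument supplies the reverse inequality together with the claimed integrality.

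For the key monotonicity claim, I would fix $j$, the coordinates $a_i^\ast$ for $i \neq j$, and the full $x^\ast$, and introduce $q_j \defi a_j \, e^{\alpha_j - \beta_j x_j^\ast}$, which depends linearly on $a_j$ and ranges over $[0, e^{\alpha_j - \beta_j x_j^\ast}]$. Setting $(\rho')_{ik} \defi (1 - q_i)\rho_{ik}$, the balance equations read $v = \theta + (\rho')^\top v$, and the $q_j$-dependence of $I - (\rho')^\top$ is rank-one: $I - (\rho')^\top = A_0 + q_j \, r_j e_j^\top$, where $A_0$ is this matrix at $q_j = 0$ and $r_j$ is the $j$-th row of $\rho$ viewed as a column vector. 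The Sherman-Morrison formula gives a closed-form expression for $v(q_j) = (A_0 + q_j r_j e_j^\top)^{-1}\theta$; substituting into the objective $\sum_i (x_i^\ast - \psi_i) q_i v_i(q_j)$ and simplifying yields
\[
\mathrm{obj}(q_j) \;=\; K \;+\; \frac{D\, q_j}{1 + c\, q_j},
\]
with constants $K, D, c$ depending only on the fixed coordinates. The denominator $1 + c q_j$ is positive throughout the relevant range, since it equals $1$ at $q_j = 0$ and by Remark \ref{rmk:balance-unique} cannot vanish without violating non-singularity of $I - (\rho')^\top$. Thus the derivative $D/(1 + c q_j)^2$ has the constant sign $\mathrm{sgn}(D)$, and $\mathrm{obj}$ is monotone in $q_j$, attaining its maximum at an endpoint, i.e., at $a_j \in \{0,1\}$.

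To finish, I would iterate through $j \in \mathcal{J}$ starting from $(v^\ast, x^\ast, a^\ast)$: whenever $a_j^\ast \in (0,1)$, replace it by the endpoint selected by the monotonicity and update $v$ via the balance equations. Each replacement does not decrease the objective, so the resulting $(v', x^\ast, a')$ is $(\ref{eqn:sp1})$-feasible with objective $\geq \mathrm{obj}(v^\ast, x^\ast, a^\ast) = \mathrm{opt}(\ref{eqn:sp2})$. This forces $\mathrm{opt}(\ref{eqn:sp1}) = \mathrm{opt}(\ref{eqn:sp2})$. Moreover, in the strictly monotone case ($D \neq 0$) the endpoint is the unique maximizer, so $a_j^\ast \in (0,1)$ is impossible; in the degenerate case $D = 0$ the objective is constant in $q_j$, so $a_j^\ast$ may be taken integral without loss.

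The hard part will be the Sherman-Morrison reduction itself: correctly identifying the rank-one perturbation, verifying positivity of $1 + c q_j$ throughout the range, and cleanly extracting the rational form of $\mathrm{obj}(q_j)$. A slightly cleaner alternative would be to reinterpret $(\ref{eqn:sp2})$ (for fixed $x$) as a terminating Markov decision process whose action at each state $j$ is whether to offer product $j$; the Bellman operator is linear in each $a_j$, and classical MDP theory then supplies an optimal deterministic stationary policy, delivering integrality without the explicit matrix calculation.
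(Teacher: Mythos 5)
Your argument is correct, but it takes a genuinely different route from the paper. The paper fixes $x$, substitutes $r_j = a_j v_j$, and observes that the inner problem in $(v,r)$ becomes a linear program with box constraints $0 \le r_j \le v_j$; a basic optimal solution must have each $r_j$ at an endpoint, which yields integrality of $a$ in one stroke. You instead work coordinate by coordinate in the original variable $a_j$ and show, via a Sherman--Morrison computation on the rank-one perturbation of $I - (\rho')^{\top}$, that the objective is a monotone linear-fractional function of $q_j = a_j e^{\alpha_j - \beta_j x_j^*}$, so each coordinate can be pushed to an endpoint. I checked the one step you flagged as delicate: with $s \defi A_0^{-1} r_j$ and $c \defi e_j^{\top} A_0^{-1} r_j$ one has $c = s_j$, which is exactly what kills the would-be quadratic term in the numerator of the $i = j$ contribution and leaves the clean form $K + Dq_j/(1+cq_j)$; moreover $c \ge 0$ because $A_0^{-1} = I + \sum_{n\ge1} ((\rho')^{\top})^n \ge 0$ and $r_j \ge 0$, so the denominator is at least $1$ on the whole range. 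The paper's LP argument is shorter and dovetails with the conic reformulation that follows; your computation is heavier but yields the explicit monotonicity direction in each coordinate, and your closing MDP reinterpretation (deterministic optimal policies for a terminating chain) is really the same structural fact as the paper's LP vertex argument in disguise. One caveat applies equally to both proofs: in the degenerate case ($D = 0$ for you, a non-vertex optimum for the paper) a fractional optimal $a^*$ can exist, so strictly speaking both arguments establish only that \emph{an} integral optimal solution exists rather than that \emph{every} optimal $a^*$ is integral --- but that weaker conclusion is all that the rest of the paper uses, and you are more explicit than the paper in acknowledging it.
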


\begin{proof}
	See Appendix. 
\end{proof}

That is, we can solve (\ref{eqn:sp2}) by solving (\ref{eqn:sp1})

Next, we add a variable $d \defi (d_{j}, \ j \in \mathcal{J})$ to denote the choice probabilities of products. 
Thus, $d_{j} = a_{j} \exp\left( \alpha_{j} - \beta_{j} x_{j} \right) v_{j}$ for each $j \in \mathcal{J}$, and we obtain the following optimization problem: 
\begin{subequations}
\begin{align}
\max_{v, \ d, \ x, \ a} \quad
	& \sum_{j \in \mathcal{J}} \left(x_{j} - \psi_{j}\right) d_{j}
\tag{$\mathsf{SP}_2'$}
\label{eqn:sp2p} \\
\text{s.t.} \quad
& v_{j} \ \ = \ \ 
		\theta_{j} + \sum_{i \in \mathcal{J}} 
		\rho_{ij} (v_{i} - d_{i})
		& \forall \ j \in \mathcal{J} \nonumber \\
& d_{j} \ \ = \ \ a_{j} \exp\left( \alpha_{j} - \beta_{j} x_{j} \right) v_{j}
		& \forall \ j \in \mathcal{J} \nonumber \\
& a_{j} \ \ \in \ \ [0,1]
		& \forall \ j \in \mathcal{J} \nonumber \\
& x_{j} \ \ \leq \ \ \overline{x}_{j}
		& \forall \ j \in \mathcal{J} \nonumber \\
& x_{j} \ \ \geq \ \ \underline{x}_{j}
		& \forall \ j \in \mathcal{J} \nonumber 
\end{align}
\end{subequations}
Clearly, for any $(v,d,x,a)$ feasible for (\ref{eqn:sp2p}), it holds that $(v,x,a)$ is feasible for (\ref{eqn:sp2}) and has the same objective value. Conversely, for any $(v,x,a)$ feasible for (\ref{eqn:sp2}), it holds that $(v,d,x,a)$, with $d$ given by $d_{j} = a_{j} \exp\left( \alpha_{j} - \beta_{j} x_{j} \right) v_{j}$ for each $j \in \mathcal{J}$, is feasible for (\ref{eqn:sp2p}) and has the same objective value. Now, recall that the solution to (\ref{eqn:balance}) equations is non-negative. Thus, $v \geq 0$ in any feasible solution to (\ref{eqn:sp2p}), which means $\exp\left( \alpha_{j} - \beta_{j} x_{j} \right) v_{j} \geq 0$ for every $j \in \mathcal{J}$. Thus, (\ref{eqn:sp2p}) can be solved by solving the following optimization problem: 
\begin{subequations}
\begin{align}
\max_{v, \ d, \ x} \quad
	& \sum_{j \in \mathcal{J}} \left(x_{j} - \psi_{j}\right) d_{j}
\tag{$\mathsf{SP}_2''$}
\label{eqn:sp2pp} \\
\text{s.t.} \quad
& v_{j} \ \ = \ \ 
		\theta_{j} + \sum_{i \in \mathcal{J}} 
		\rho_{ij} (v_{i} - d_{i})
		& \forall \ j \in \mathcal{J} \label{eqn:sp2pp-balance} \\
& d_{j} \ \ \leq \ \ \exp\left( \alpha_{j} - \beta_{j} x_{j} \right) v_{j}
		& \forall \ j \in \mathcal{J} \label{eqn:sp2pp-d} \\
& d_{j} \ \ \geq \ \ 0
		& \forall \ j \in \mathcal{J} \label{eqn:sp2pp-d0} \\
& x_{j} \ \ \leq \ \ \overline{x}_{j}
		& \forall \ j \in \mathcal{J} \label{eqn:sp2pp-upper} \\
& x_{j} \ \ \geq \ \ \underline{x}_{j}
		& \forall \ j \in \mathcal{J} \label{eqn:sp2pp-lower} 
\end{align}
\end{subequations}
More concretely, for any $(v,d,x,a)$ feasible for (\ref{eqn:sp2p}), it holds that $(v,d,x)$ is feasible for (\ref{eqn:sp2pp}) and has the same objective value. Conversely, for any $(v,d,x)$ feasible for (\ref{eqn:sp2pp}), it holds that $(v,d,x,a)$, with $a$ given by 
\begin{align*}
a_{j} &= 
\begin{cases}
		0  & \text{if } d_{j} = 0 \\
		d_{j} / \exp\left( \alpha_{j} - \beta_{j} x_{j} \right) v_{j} & \text{if } d_{j} > 0 \\
\end{cases}
		& \forall \ j \in \mathcal{J} 
\end{align*}
is feasible for (\ref{eqn:sp2p}) and has the same objective value.

\begin{align*}
& d_{j} \ln\left( \frac{d_{j}}{v_{j}} \right) \ \ \leq \ \ \alpha_{j} d_{j} - \beta_{j} u_{j} 
		& \forall \ j \in \mathcal{J} 
\end{align*}

Next, we show that an optimal solution to (\ref{eqn:sp2pp}) can be obtained by solving a convex conic program: 
\begin{subequations}
\begin{align}
\max_{v, \ d, \ u} \quad
	& \sum_{j \in \mathcal{J}} \left(u_{j} - \psi_{j} d_{j} \right)
\tag{$\mathsf{SP}_3$}
\label{eqn:sp3} \\
\text{s.t.} \quad
& v_{j} \ \ = \ \ 
		\theta_{j} + \sum_{i \in \mathcal{J}} 
		\rho_{ij} (v_{i} - d_{i})
		& \forall \ j \in \mathcal{J} \label{eqn:sp3-balance} \\
&(v_{j}, \ d_{j}, \ \beta_{j} u_{j} - \alpha_{j} d_{j}) 
		\ \ \in \ \ \mathcal{K}_{\text{exp}}
		& \forall \ j \in \mathcal{J} \label{eqn:sp3-cone} \\
& u_{j} \ \ \leq \ \ \overline{x}_{j} d_{j} 
		& \forall \ j \in \mathcal{J} \label{eqn:sp3-upper} \\
& u_{j} \ \ \geq \ \ \underline{x}_{j} d_{j} 
		& \forall \ j \in \mathcal{J} \label{eqn:sp3-lower}
\end{align}
\end{subequations}
where
\begin{align*}
\mathcal{K}_{\exp} \ \ & \defi \ \ \mbox{closure}\big\{(a_{1}, a_{2}, a_{3}) \; : \; a_{3} \leq a_{2} \ln(a_{1} / a_{2}), \; a_{1} > 0, \; a_{2} > 0\big\} \\
& = \ \ \big\{(a_{1}, a_{2}, a_{3}) \; : \; a_{3} \leq a_{2} \ln(a_{1} / a_{2}), \; a_{1} > 0, \; a_{2} > 0\big\} \cup \big\{(a_{1}, 0, a_{3}) \; : \; a_{1} \geq 0, \; a_{3} \leq 0\big\}
\end{align*}
denotes the exponential cone. 
Consider any $(v,d,u)$ feasible for (\ref{eqn:sp3}), and let 
\begin{align*}
x_{j} &= 
\begin{cases}
		\underline{x}_{j} & \text{if } d_{j} = 0 \\
		u_{j} / d_{j} & \text{if } d_{j} > 0 \\
\end{cases}
		& \forall \ j \in \mathcal{J} 
\end{align*}
Then the conic constraint (\ref{eqn:sp3-cone}) implies that either
\begin{align*}
& \ln\left( \frac{d_{j}}{v_{j}} \right) \ \ \leq \ \ \alpha_{j} - \beta_{j} x_{j} 
		& \forall \ j \in \mathcal{J} 
\end{align*}
(when $d_{j} > 0$), or $d_{j} = 0$. 
This means constraints (\ref{eqn:sp2pp-d}) and (\ref{eqn:sp2pp-d0}) hold in (\ref{eqn:sp2pp}). 
Thus, $(v,d,x)$ is feasible for (\ref{eqn:sp2pp}) and has the same objective value. 
Conversely, consider any $(v,d,x)$ feasible for (\ref{eqn:sp2pp}). 
Then it holds that $(v,d,u)$, with $u$ given by $u_{j} = x_{j} d_{j}$ for each $j \in \mathcal{J}$, is feasible for (\ref{eqn:sp3}) and has the same objective value. 

Also, we have: 
\begin{lemma}
\label{lem:sp3 optimal}
(\ref{eqn:sp3}) has an optimal solution.
\end{lemma}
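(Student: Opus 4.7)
The plan is to invoke the Weierstrass extreme value theorem. Since the objective $\sum_{j} (u_j - \psi_j d_j)$ is linear, hence continuous, it suffices to show that the feasible region of (\ref{eqn:sp3}) is nonempty, closed, and bounded.

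Nonemptiness and closedness are the easy steps. For nonemptiness I would exhibit a feasible point by taking $d = 0$ and $u = 0$; then (\ref{eqn:sp3-balance}) reduces to $v = \theta + \rho^{\top}(v - 0)$, which by Remark \ref{rmk:balance-unique} has the unique nonnegative solution $v = (I - \rho^{\top})^{-1}\theta$. The triple $(v_j, 0, 0)$ belongs to $\mathcal{K}_{\exp}$ via the second branch in the explicit description of the cone (since $v_j \geq 0$ and $\beta_j \cdot 0 - \alpha_j \cdot 0 = 0 \leq 0$), and the price-bound constraints (\ref{eqn:sp3-upper})-(\ref{eqn:sp3-lower}) hold trivially. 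Closedness follows because $\mathcal{K}_{\exp}$ is defined as a closure (hence closed), while (\ref{eqn:sp3-balance}), (\ref{eqn:sp3-upper}), and (\ref{eqn:sp3-lower}) are linear.

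The main obstacle is boundedness, and I would handle it in three steps. First, I would observe that the cone constraint (\ref{eqn:sp3-cone}) forces $v_j \geq 0$ and $d_j \geq 0$ in both branches of the explicit description of $\mathcal{K}_{\exp}$. Second, using these sign conditions together with $\rho \geq 0$, the balance equation (\ref{eqn:sp3-balance}) rewrites as $(I - \rho^{\top}) v = \theta - \rho^{\top} d \leq \theta$; since $(I - \rho^{\top})^{-1} = \sum_{n \geq 0}(\rho^{\top})^{n}$ is a nonnegative matrix under the assumptions in Section \ref{sec:model}, this gives $0 \leq v \leq (I - \rho^{\top})^{-1}\theta$, a finite vector independent of the choice of $(d, u)$. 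Third, to bound $d$, I would combine the cone constraint with the lower price bound (\ref{eqn:sp3-lower}): when $d_j > 0$, the inequality $d_j \ln(v_j / d_j) \geq \beta_j u_j - \alpha_j d_j \geq (\beta_j \underline{x}_j - \alpha_j) d_j$ rearranges to $d_j \leq v_j \exp(\alpha_j - \beta_j \underline{x}_j)$, and this bound holds trivially when $d_j = 0$. Hence $d$ is bounded, and then $u$ is sandwiched by $\underline{x}_j d_j \leq u_j \leq \overline{x}_j d_j$ via (\ref{eqn:sp3-upper})-(\ref{eqn:sp3-lower}), so $u$ is bounded as well.

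With the feasible region shown nonempty, closed, and bounded (hence compact) and the objective continuous, Weierstrass's theorem delivers an optimal solution. The only delicate point to verify carefully is step one of the boundedness argument, namely that $v_j \geq 0$ is genuinely enforced by the cone even in the degenerate case $d_j = 0$, which is why I rely on the explicit two-branch description of $\mathcal{K}_{\exp}$ provided in the excerpt rather than only on the interior inequality $\beta_j u_j - \alpha_j d_j \leq d_j \ln(v_j / d_j)$.
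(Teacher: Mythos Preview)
Your proof is correct and takes a genuinely different route from the paper's. The paper proves Lemma~\ref{lem:sp3 optimal} by writing out the conic dual (\ref{eqn:sd1}) of (\ref{eqn:sp3}), explicitly constructing a point $(\eta,\pi,\overline{\nu},\underline{\nu})$ in the \emph{interior} of the dual cone, and then invoking the refined conic duality theorem of \cite{ben2001lectures}: since the primal is feasible the dual is bounded, and strict dual feasibility forces the primal optimum to be attained. Your argument bypasses duality entirely and establishes compactness of the primal feasible set directly, using the nonnegativity of $(I-\rho^{\top})^{-1}$ from Remark~\ref{rmk:balance-unique} to bound $v$, the cone inequality combined with (\ref{eqn:sp3-lower}) to bound $d$, and the price-box constraints to bound $u$. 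Your approach is more elementary and self-contained, requiring nothing beyond Weierstrass and the structural facts already recorded in Section~\ref{sec:model}; the paper's approach, on the other hand, makes the dual problem explicit and the same strict-feasibility construction is reused later (see the proof of Lemma~\ref{lem:rp2 optimal}), so it pays a one-time setup cost that amortizes across both lemmas.
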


\begin{proof}
	See Appendix. 
\end{proof}

Next, we summarize the results for (\ref{eqn:sp1}): 
\begin{theorem}
\label{lem:sp1 optimal}
(\ref{eqn:sp1}) has an optimal solution, which can be found by solving the convex conic program (\ref{eqn:sp3}): Let $(v^*,d^*,u^*)$ be an optimal solution to (\ref{eqn:sp3}). Let 
\begin{align*}
x_{j}^* &= 
\begin{cases}
		\underline{x}_{j} & \text{if } d_{j}^* = 0 \\
		u_{j}^* / d_{j}^* & \text{if } d_{j}^* > 0 \\
\end{cases}
		& \forall \ j \in \mathcal{J} 
\end{align*}
and let
\begin{align*}
a_{j} &= 
\begin{cases}
		0  & \text{if } d_{j}^* = 0 \\
		d_{j}^* / \exp\left( \alpha_{j} - \beta_{j} x_{j}^* \right) v_{j}^* & \text{if } d_{j}^* > 0 \\
\end{cases}
		& \forall \ j \in \mathcal{J} 
\end{align*}
Then $(v^*,x^*,a^*)$ is an optimal solution to (\ref{eqn:sp1}). 
\end{theorem}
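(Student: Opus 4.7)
The plan is to chain together the equivalences between (\ref{eqn:sp1}), (\ref{eqn:sp2}), (\ref{eqn:sp2p}), (\ref{eqn:sp2pp}), and (\ref{eqn:sp3}) that were already laid out in the discussion preceding the theorem statement, and then apply the two lemmas (Lemma \ref{lem:sp2 integral} and Lemma \ref{lem:sp3 optimal}) at the two ends. The only real work is to verify that the solution $(v^*, x^*, a^*)$ defined by the formulas in the theorem statement is well-defined and attains the optimal objective value of (\ref{eqn:sp1}); everything else has been set up in the reformulation chain.

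First I would invoke Lemma \ref{lem:sp3 optimal} to obtain an optimal solution $(v^*, d^*, u^*)$ to (\ref{eqn:sp3}). I then verify, using the case split in the definition of $x^*_j$, that the triple $(v^*, d^*, x^*)$ is feasible for (\ref{eqn:sp2pp}) with the same objective value: on indices with $d^*_j = 0$, constraint (\ref{eqn:sp2pp-d}) is trivially satisfied (since $v^* \geq 0$) and constraints (\ref{eqn:sp2pp-upper})–(\ref{eqn:sp2pp-lower}) hold because $x^*_j = \underline{x}_j$; on indices with $d^*_j > 0$, the exponential cone constraint (\ref{eqn:sp3-cone}) gives $\beta_j u^*_j - \alpha_j d^*_j \leq d^*_j \ln(v^*_j / d^*_j)$, which after dividing by $d^*_j > 0$ and substituting $x^*_j = u^*_j / d^*_j$ rearranges exactly to (\ref{eqn:sp2pp-d}); the price bounds (\ref{eqn:sp2pp-upper})–(\ref{eqn:sp2pp-lower}) follow from (\ref{eqn:sp3-upper})–(\ref{eqn:sp3-lower}) by the same division. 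The objectives match since $u^*_j = x^*_j d^*_j$ on the support of $d^*$, and the zero-$d^*_j$ terms contribute zero to both objectives.

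Since the paragraphs preceding the theorem already show that feasible solutions of (\ref{eqn:sp2pp}) map back to feasible solutions of (\ref{eqn:sp3}) with equal objective value, the optimality of $(v^*, d^*, u^*)$ for (\ref{eqn:sp3}) transfers to optimality of $(v^*, d^*, x^*)$ for (\ref{eqn:sp2pp}). I then apply the recovery formula for $a$ given in the statement to lift this to an optimal solution $(v^*, d^*, x^*, a^*)$ of (\ref{eqn:sp2p}): the verification is again a case split. When $d^*_j = 0$ we set $a^*_j = 0$, so both sides of the defining equation $d^*_j = a^*_j \exp(\alpha_j - \beta_j x^*_j) v^*_j$ are zero. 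When $d^*_j > 0$, the formula forces equality by construction; moreover $a^*_j \in [0,1]$ because constraint (\ref{eqn:sp2pp-d}) ensures $d^*_j / (\exp(\alpha_j - \beta_j x^*_j) v^*_j) \leq 1$ (and nonnegativity is immediate). Dropping the redundant variable $d^*$ yields an optimal $(v^*, x^*, a^*)$ for (\ref{eqn:sp2}), again with the same objective value by the argument in the excerpt.

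Finally I invoke Lemma \ref{lem:sp2 integral}, which guarantees that any optimizer of the relaxation (\ref{eqn:sp2}) automatically has $a^*$ integral and is also optimal for (\ref{eqn:sp1}). This closes the loop and yields the theorem. The step I expect to require the most care is the $d^*_j = 0$ case in the recovery of $x^*$ and $a^*$, since the cone constraint degenerates there and one must argue that assigning $x^*_j = \underline{x}_j$ and $a^*_j = 0$ produces a genuinely feasible and objective-preserving solution; the rest of the argument is bookkeeping across the four reformulations.
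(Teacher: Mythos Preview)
Your proposal is correct and follows essentially the same route as the paper: the theorem is presented there precisely as a summary of the reformulation chain (\ref{eqn:sp1}) $\leftrightarrow$ (\ref{eqn:sp2}) $\leftrightarrow$ (\ref{eqn:sp2p}) $\leftrightarrow$ (\ref{eqn:sp2pp}) $\leftrightarrow$ (\ref{eqn:sp3}) developed in the preceding paragraphs, capped by Lemma~\ref{lem:sp3 optimal} at one end and Lemma~\ref{lem:sp2 integral} at the other. Your explicit treatment of the degenerate case $d_j^* = 0$ in the recovery formulas is a welcome clarification that the paper leaves implicit.
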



\section{Network Revenue Management with Pricing}
\label{sec:network}

In this section, we extend the previous results to the network revenue management setting, where a seller manages a network of resources, each of which has a limited amount of capacity. 
Each product uses a particular combination of resources. 
Customers arrive at the system one by one and choose among the offered products according to a discrete choice model. 
When the seller sells a product, it generates revenue and consumes the capacities of the resources used by the product. 
The seller's objective is to maximize the total expected profit or revenue over a planning time period, with constraints in resource capacity. 

\cite{feldman2017revenue} formulated a linear programming approximation to the network revenue management problem under the Markov chain choice model. 
This section extends this network revenue management problem to allow pricing decisions. 
Let $\mathcal{R}$ denote the set of resources. 
For each resource $r \in \mathcal{R}$, let $b_{r}$ denote the amount of the resource available for use over the time period. 
For each pair of product $j \in \mathcal{J}$ and resource $r \in \mathcal{R}$, let $\phi_{rj}$ denote the amount of resource~$r$ needed per unit of product~$j$. 
Let $T$ be the length of the planning period. 
We assume that the customer arrival during $t \in [0,T]$ follows a Poisson distribution with rate $\lambda(t)$. 
Thus, we formulate the extended network revenue management problem as: 
\begin{subequations}
\begin{align}
\max_{\mathcal{A}(t), \ v(t), \ x(t)} \quad
	& \int_{t=0}^{T} \lambda(t)
			\left( \sum_{j \in \mathcal{A}(t)} \left(x_{j}(t) - \psi_{j}\right) 
			\exp\left( \alpha_{j} - \beta_{j} x_{j}(t) \right) v_{j}(t)  \right) \diff t
\tag{$\mathsf{CBP}_1$}
\label{eqn:cbp1} \\
\text{s.t.} \quad
& \int_{t=0}^{T} \lambda(t) \left(\sum_{j \in \mathcal{A}(t)} \phi_{rj} 
		\exp\left( \alpha_{j} - \beta_{j} x_{j}(t) \right) v_{j}(t) \right) \diff t
		\ \ \leq \ \ \varphi_{r}
		& \forall \ r \in \mathcal{R} \nonumber \\
& v_{j}(t)  \ \ = \ \ 
		\theta_{j} + \sum_{i \in \mathcal{J}} 
		\left[1 - \mathbf{1}(i \in \mathcal{A}(t)) \exp\left( \alpha_{i} - \beta_{i} x_{i}(t) \right) \right] 
		\rho_{ij} v_{i}(t) 
		& \forall \ j \in \mathcal{J} \ , \ t \in [0, T] \nonumber \\
& x_{j}(t) \ \ \leq \ \ \overline{x}_{j}
		& \forall \ j \in \mathcal{J} \ , \ t \in [0, T] \nonumber \\
& x_{j}(t) \ \ \geq \ \ \underline{x}_{j}
		& \forall \ j \in \mathcal{J} \ , \ t \in [0, T] \nonumber \\
& \mathcal{A}(t) \ \ \subseteq \ \ \mathcal{J}
		& \forall \ t \in [0, T] \nonumber 
\end{align}
\end{subequations}
where $v(t) \defi (v_{j}(t) : \ j \in \mathcal{J})$ and $x(t) \defi (x_{j}(t) : \ j \in \mathcal{J})$. 

Problem (\ref{eqn:cbp1}) is hard to work with for many reasons. In this section, we provide an efficient solution approach to (\ref{eqn:cbp1}) through the following steps: 
\begin{enumerate}
	\item We show that if (\ref{eqn:cbp1}) has an optimal solution, then it has an optimal solution with a constant price vector (i.e. $x(t)$ is a constant over $t \in [0, 1]$). Since $\mathcal{A}(t)$ can only take a finite number of values, this means we can reduce (\ref{eqn:cbp1}) to a problem (\ref{eqn:cbp2}) with finite number of decision variables which do not depend on time. 
	\item The number of variables in (\ref{eqn:cbp2}) grows exponentially as the number of products grow. Thus, (\ref{eqn:cbp2}) is still hard to solve. We then show (\ref{eqn:cbp2}) can be solved by solving a much smaller problem (\ref{eqn:rp1}) (i.e. an optimal solution to (\ref{eqn:rp1}) can be transformed into an optimal solution to (\ref{eqn:cbp2}) in polynomial time). 
	\item We show that (\ref{eqn:rp1}) can be solved by solving a convex conic program (\ref{eqn:rp2}) (i.e. an optimal solution to (\ref{eqn:rp2}) can be transformed into an optimal solution to (\ref{eqn:rp1}) in polynomial time). 
\end{enumerate}
Thus, we can solve (\ref{eqn:cbp1}) efficiently by solving a convex conic program (\ref{eqn:rp2}), and then transform the optimal solution to (\ref{eqn:rp2}) we found back to an optimal solution to (\ref{eqn:cbp1}) efficiently.


\subsection{Constant Price Vector}

Suppose that the seller chooses the prices of products at the beginning of the time period then decides which products to make available dynamically during the time period. 
Let $w^{\mathcal{A}}$ denote the proportion of customers that are offered assortment $\mathcal{A} \subseteq \mathcal{J}$ during the time period, and let $w \defi (w^{\mathcal{A}}, \mathcal{A} \subseteq \mathcal{J})$. 
Let $\overline{\lambda} = \int_{t=0}^{T} \lambda(t) \diff t$. 
Consider the following choice-based optimization problem: 
\begin{subequations}
\begin{align}
\max_{w, \ \mathbf{v}, \ x} \quad
	& \sum_{\mathcal{A} \subseteq \mathcal{J}} \overline{\lambda} w^{\mathcal{A}} 
			\left( \sum_{j \in \mathcal{A}} \left(x_{j} - \psi_{j}\right) 
			\exp\left( \alpha_{j} - \beta_{j} x_{j} \right) v_{j}^{\mathcal{A}}  \right)
\tag{$\mathsf{CBP}_2$}
\label{eqn:cbp2} \\
\text{s.t.} \quad
& \sum_{\mathcal{A} \subseteq \mathcal{J}} \overline{\lambda} w^{\mathcal{A}} 
		\left(\sum_{j \in \mathcal{A}} \phi_{rj} \exp\left( \alpha_{j} - \beta_{j} x_{j} \right) v_{j}^{\mathcal{A}} \right)
		\ \ \leq \ \ \varphi_{r}
		& \forall \ r \in \mathcal{R} \nonumber \\
& \sum_{\mathcal{A} \subseteq \mathcal{J}} w^{\mathcal{A}} \ \ = \ \ 1 \nonumber \\
& w \ \ \geq \ \ 0 \nonumber \\
& v_{j}^{\mathcal{A}}  \ \ = \ \ 
		\theta_{j} + \sum_{i \in \mathcal{J}} 
		\left[1 - \mathbf{1}(i \in \mathcal{A}) \exp\left( \alpha_{i} - \beta_{i} x_{i} \right) \right] 
		\rho_{ij} v_{i}^{\mathcal{A}} 
		& \forall \ j \in \mathcal{J}  \ , \ \mathcal{A} \subseteq \mathcal{J} \nonumber \\
& x_{j} \ \ \leq \ \ \overline{x}_{j}
		& \forall \ j \in \mathcal{J} \nonumber \\
& x_{j} \ \ \geq \ \ \underline{x}_{j}
		& \forall \ j \in \mathcal{J} \nonumber 
\end{align}
\end{subequations}
where $\mathbf{v} \defi (v_{j}^{\mathcal{A}}, \ j \in \mathcal{J} , \ \mathcal{A} \subseteq \mathcal{J})$. 

In (\ref{eqn:cbp1}), prices of products can change at any time during the planning period. In (\ref{eqn:cbp2}), however, prices are assumed to stay constant over time. 
Clearly, (\ref{eqn:cbp1}) is a relaxation to (\ref{eqn:cbp2}), and any feasible solution to (\ref{eqn:cbp2}) can be transformed into a feasible solution to (\ref{eqn:cbp1}) with equal objective value. 
However, it is not trivial to ask whether allowing prices to change over time improves revenue. 
Here, we have: 

\begin{theorem}
	\label{thm:rm-constant-price}
	The optimal objective value of (\ref{eqn:cbp1}) does not exceed that of (\ref{eqn:cbp2}). 
\end{theorem}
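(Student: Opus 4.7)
The plan is to work in the conic $(v, d, u)$ reformulation of Section~\ref{sec:joint}, apply Jensen-type time-averaging, and then lift the resulting static point back to a (\ref{eqn:cbp2}) feasible solution. Given any feasible trajectory $(\mathcal{A}^*(\cdot), v^*(\cdot), x^*(\cdot))$ of (\ref{eqn:cbp1}), I would first set $d_j^*(t) \defi \mathbf{1}(j \in \mathcal{A}^*(t)) \exp(\alpha_j - \beta_j x_j^*(t)) v_j^*(t)$ and $u_j^*(t) \defi x_j^*(t) d_j^*(t)$. At every $t$, the triple $(v^*(t), d^*(t), u^*(t))$ satisfies the convex per-time constraints analogous to those of (\ref{eqn:sp3}): the linear balance equation, the exponential-cone constraint $(v_j^*(t), d_j^*(t), \beta_j u_j^*(t) - \alpha_j d_j^*(t)) \in \mathcal{K}_{\exp}$, and the linear price-bound constraints $\underline{x}_j d_j^*(t) \leq u_j^*(t) \leq \overline{x}_j d_j^*(t)$.

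Next, I would form the $\lambda$-weighted time averages $\tilde{v}_j \defi \bar{\lambda}^{-1} \int_0^T \lambda(t) v_j^*(t)\,\diff t$, and analogously $\tilde{d}_j$ and $\tilde{u}_j$. Linearity of the balance equation and the price-bound constraints preserves those constraints under averaging, and convexity of $\mathcal{K}_{\exp}$ combined with Jensen's inequality applied to the probability measure $\lambda(t)\,\diff t / \bar{\lambda}$ preserves the exponential-cone constraint. The resource constraints collapse to $\bar{\lambda} \sum_j \phi_{rj} \tilde{d}_j \leq \varphi_r$, and the (\ref{eqn:cbp1}) objective evaluates to $\bar{\lambda} \sum_j (\tilde{u}_j - \psi_j \tilde{d}_j)$, which is exactly the original trajectory's objective value. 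Defining $\tilde{x}_j \defi \tilde{u}_j / \tilde{d}_j$ (or $\underline{x}_j$ when $\tilde{d}_j = 0$) then extracts a single constant price vector, and $\tilde{a}_j \defi \tilde{d}_j / (\exp(\alpha_j - \beta_j \tilde{x}_j) \tilde{v}_j) \in [0,1]$ gives a fractional availability consistent with the conic constraint.

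The hard part will be to realize $(\tilde{v}, \tilde{d}, \tilde{u}, \tilde{x}, \tilde{a})$ as an honest (\ref{eqn:cbp2}) feasible solution. Concretely, I would need to construct a distribution $w$ on $2^{\mathcal{J}}$ such that, with $v^{\mathcal{A}}(\tilde{x})$ given by the balance equations for each assortment $\mathcal{A}$ at the common price $\tilde{x}$, the aggregated demand $\sum_{\mathcal{A}} w^{\mathcal{A}} \mathbf{1}(j \in \mathcal{A}) \exp(\alpha_j - \beta_j \tilde{x}_j) v_j^{\mathcal{A}}(\tilde{x})$ reproduces $\tilde{d}_j$, the resource usages stay within their caps, and the objective matches $\bar{\lambda} \sum_j (\tilde{u}_j - \psi_j \tilde{d}_j)$. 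This lifting is nontrivial because each $v^{\mathcal{A}}$ depends on $\mathcal{A}$ through its own balance equation, so naive constructions (for instance, an independent-Bernoulli mixture with inclusion probabilities $\tilde{a}_j$) will generally not reproduce the aggregated visit counts or demands. To close this gap, I would invoke the sales-based/choice-based equivalence for the Markov chain choice model developed in \cite{feldman2017revenue}, in the same spirit as the subsequent reduction of (\ref{eqn:cbp2}) to a polynomial-size formulation; that equivalence produces a distribution $w$ compatible with any fractional availability vector satisfying the conic constraints, which yields the desired (\ref{eqn:cbp2}) solution and completes the argument that the optimal value of (\ref{eqn:cbp1}) does not exceed that of (\ref{eqn:cbp2}).
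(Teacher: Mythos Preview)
Your proposal is correct and follows essentially the same strategy as the paper: pass to the conic $(v,d,u)$ variables, average in that convex space, and then lift the resulting (\ref{eqn:rp2})-feasible point back to (\ref{eqn:cbp2}) via the Feldman--Topaloglu decomposition (which the paper carries out in Lemma~\ref{lem:rp1-to-cbp2} / Algorithm~\ref{algo:reduce}). In particular, the ``hard part'' you flag---constructing a distribution $w$ over assortments at the common price $\tilde{x}$ that reproduces the aggregated demands---is exactly what Algorithm~\ref{algo:reduce} delivers once you have the (\ref{eqn:rp1})-feasible triple $(\tilde{v},\tilde{x},\tilde{a})$, so your invocation of that machinery is on target.

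The one noteworthy difference is in the averaging step. The paper first conditions on the active assortment, averaging separately over $\{t:\mathcal{A}'(t)=\mathcal{A}\}$ for each $\mathcal{A}$ to obtain a solution with prices constant on each assortment, and only then performs a second (assortment-level) averaging to land in (\ref{eqn:rp2}). You instead take a single $\lambda$-weighted global average directly in $(v,d,u)$ and use convexity of $\mathcal{K}_{\exp}$ once. Your route is cleaner and avoids the intermediate ``constant price per assortment'' construction; the paper's route has the side benefit of exhibiting that intermediate structure explicitly, but it is not needed for the theorem as stated.
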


\begin{proof}
	See Appendix. Note that the proof needs to use properties of (\ref{eqn:cbp2}) that will be shown later in this section. 
\end{proof}

Thus, an optimal solution of (\ref{eqn:cbp2}) can be transformed into optimal solution of (\ref{eqn:cbp1}). 
In other words, allowing prices to change over time does not improve revenue, and it is enough to use a constant price vector throughout the time period.


\subsection{The Reduced Problem}

As we have pointed out, (\ref{eqn:cbp2}) is hard to solve. 
The number of variables and constraints in it grows exponentially as the size of $\mathcal{J}$ grows. 
Next, we show that we can find an optimal solution to (\ref{eqn:cbp2}) by solving a much smaller optimization problem: 
\begin{subequations}
\begin{align}
\max_{v, \ x, \ a} \quad
	& \overline{\lambda} \sum_{j \in \mathcal{J}} \left(x_{j} - \psi_{j}\right) 
	a_{j} \exp\left( \alpha_{j} - \beta_{j} x_{j} \right) v_{j}
\tag{$\mathsf{RP}_1$}
\label{eqn:rp1} \\
\text{s.t.} \quad
& v_{j} \ \ = \ \ 
		\theta_{j} + \sum_{i \in \mathcal{J}} 
		\left[1 - a_{i} \exp\left( \alpha_{i} - \beta_{i} x_{i} \right) \right] \rho_{ij} v_{i}
		& \forall \ j \in \mathcal{J} \nonumber \\
& \overline{\lambda} \left(\sum_{j \in \mathcal{J}} \phi_{rj} 
		a_{j} \exp\left( \alpha_{j} - \beta_{j} x_{j} \right) v_{j} \right)
		\ \ \leq \ \ \varphi_{r}
		& \forall \ r \in \mathcal{R} \nonumber \\
& a_{j} \ \ \in \ \ [0,1]
		& \forall \ j \in \mathcal{J} \nonumber \\
& x_{j} \ \ \leq \ \ \overline{x}_{j}
		& \forall \ j \in \mathcal{J} \nonumber \\
& x_{j} \ \ \geq \ \ \underline{x}_{j}
		& \forall \ j \in \mathcal{J} \nonumber 
\end{align}
\end{subequations}
where $a \defi (a_{j}, \ j \in \mathcal{J})$. 

Our approach here is very similar to that from \cite{feldman2017revenue}, in which the authors showed that the size of their choice-based linear program can be reduced drastically. 
We adjusted their approach such that it fits the extended problem we formulated. 
First, consider any feasible solution $(\mathbf{v}, x, w)$ to (\ref{eqn:cbp2}). Let 
\begin{align*}
v_{j} &= \sum_{\mathcal{A} \subseteq \mathcal{J}} w^{\mathcal{A}} v_{j}^{\mathcal{A}}
		\quad , \quad a_{j} = \sum_{\mathcal{A} \subseteq \mathcal{J}} 
				\mathbf{1}(j \in \mathcal{J}) w^{\mathcal{A}} v_{j}^{\mathcal{A}}
				 \Big/ \sum_{\mathcal{A} \subseteq \mathcal{J}} w^{\mathcal{A}} v_{j}^{\mathcal{A}}
		& \forall \ j \in \mathcal{J} 
\end{align*}
Then $(v, x, a)$ is clearly a feasible solution to (\ref{eqn:rp1}) with an identical objective value as $(\mathbf{v}, x, w)$ in (\ref{eqn:cbp2}). Second, consider the algorithm: 

\hfill

\begin{algorithm}[H]
\caption{Dimension Reduction}
\label{algo:reduce}
\SetAlgoLined
\textbf{Initialization} : Set $k=1$. 
Set $\hat{v}_{j}^{(1)} = v_{j}$, $\hat{a}_{j}^{(1)} = a_{j}$ for every $j \in \mathcal{J}$

\textbf{Step 1} : Set $\mathcal{A}_{k} = \{j \in \mathcal{J} : \hat{a}_{j}^{(k)} \hat{v}_{j}^{(k)} > 0\}$. 
If $\mathcal{A}_{k} = \varnothing$, then set $y^{\mathcal{A}_{k}} = 1$ and stop. 

\textbf{Step 2} : Compute the unique (see Remark \ref{rmk:balance-unique}) solution $(v_{j}^{\mathcal{A}_{k}} : j \in \mathcal{J})$ to the system
\begin{align*}
v_{j}^{\mathcal{A}_{k}}  &\ \ = \ \ 
		\theta_{j} + \sum_{i \in \mathcal{J}} 
		\left[1 - \mathbf{1}(i \in \mathcal{A}_{k}) \exp\left( \alpha_{i} - \beta_{i} x_{i} \right) \right] 
		\rho_{ij} v_{i}^{\mathcal{A}_{k}} 
		& \forall \ j \in \mathcal{J}
\end{align*}

\textbf{Step 3} : Set $y^{\mathcal{A}_{k}} = \min\{\hat{a}_{j}^{(k)} \hat{v}_{j}^{(k)} / v_{j}^{\mathcal{A}_{k}} : j \in \mathcal{A}_{k}\}$. If $y^{\mathcal{A}_{k}} = 1$, then stop. 

\textbf{Step 4} : Set
\begin{align*}
\hat{v}_{j}^{(k+1)} &= (\hat{v}_{j}^{(k)} - y^{\mathcal{A}_{k}} v_{j}^{\mathcal{A}_{k}}) 
		/ (1 - y^{\mathcal{A}_{k}}) 
		& \forall \ j \in \mathcal{J} \\
\hat{a}_{j}^{(k+1)} &= (\hat{a}_{j}^{(k)} \hat{v}_{j}^{(k)} - y^{\mathcal{A}_{k}} v_{j}^{\mathcal{A}_{k}}) 
				/ ((1 - y^{\mathcal{A}_{k}}) \hat{v}_{j}^{(k+1)}) 
		& \forall \ j \in \mathcal{A}_{k} \\
\hat{a}_{j}^{(k+1)} &= 0 
		& \forall \ j \in \mathcal{J} \setminus \mathcal{A}_{k} 
\end{align*}

\textbf{Step 5} : Increase $k$ by $1$, and go to Step 1. 
\end{algorithm}

\hfill

We have: 

\begin{lemma}
\label{lem:rp1-to-cbp2}
Let $(v, x, a)$ be a feasible solution to (\ref{eqn:rp1}). 
Using $(v, x, a)$ as the input, Algorithm \ref{algo:reduce} stops after at most $J+1$ iterations. 
Let $(\mathcal{A}_{k} : k = 1, \dots, K)$, $(v^{\mathcal{A}_{k}} : k = 1, \dots, K)$ and $(y^{\mathcal{A}_{k}} : k = 1, \dots, K)$ denote the output of Algorithm \ref{algo:reduce} using $(v, x, a)$ as the input, where $K$ is the number of iterations after which the algorithm stops. 
In addition, let 
\begin{align*}
w^{\mathcal{A}_{k}} &= (1 - y^{\mathcal{A}_1}) \cdots (1 - y^{\mathcal{A}_{k-1}}) y^{\mathcal{A}_{k}}
		\quad , \quad k = 1, \dots, K 
\end{align*}
let $w^{\mathcal{A}} = 0$ for any $\mathcal{A} \subseteq \mathcal{J}$ such that $\mathcal{A} \notin \{\mathcal{A}_1, \dots, \mathcal{A}_{k}\}$, and let $v^{\mathcal{A}}$ be the unique (see Remark \ref{rmk:balance-unique}) solution of the system
\begin{align*}
v_{j}^{\mathcal{A}}  
&\ \ = \ \ \theta_{j} + \sum_{i \in \mathcal{J}} 
		\left[1 - \mathbf{1}(i \in \mathcal{A}) \exp\left( \alpha_{i} - \beta_{i} x_{i} \right) \right] 
		\rho_{ij} v_{i}^{\mathcal{A}} 
		& \forall \ j \in \mathcal{J} 
\end{align*}
for every $\mathcal{A} \subseteq \mathcal{J}$ given the value of $x$. 
Then $(\mathbf{v}, x, w)$ is a feasible solution to (\ref{eqn:cbp2}) with an identical objective value as $(v, x, a)$ in (\ref{eqn:rp1}). 
\end{lemma}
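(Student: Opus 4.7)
The plan is to show that Algorithm~\ref{algo:reduce} produces weights $w^{\mathcal{A}_k}$ and vectors $v^{\mathcal{A}_k}$ realizing the mixture identities
\[
v_j \;=\; \sum_{k=1}^K w^{\mathcal{A}_k} v_j^{\mathcal{A}_k}, \qquad a_j v_j \;=\; \sum_{k=1}^K w^{\mathcal{A}_k} \mathbf{1}(j \in \mathcal{A}_k) v_j^{\mathcal{A}_k}, \qquad \sum_{k=1}^K w^{\mathcal{A}_k} \;=\; 1,
\]
where each $v^{\mathcal{A}_k}$ satisfies the (\ref{eqn:balance}) equations by construction. Once these identities are in hand, both the feasibility of $(\mathbf{v}, x, w)$ for (\ref{eqn:cbp2})---including the resource capacity inequalities---and the equality of the (\ref{eqn:cbp2}) and (\ref{eqn:rp1}) objective values follow by direct substitution, since every expression of the form $\sum_{\mathcal{A}: j \in \mathcal{A}} w^{\mathcal{A}} v_j^{\mathcal{A}}$ appearing in (\ref{eqn:cbp2}) collapses to $a_j v_j$ of (\ref{eqn:rp1}).

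I would first maintain two invariants inductively across iterations: $(\hat v^{(k)}, \hat a^{(k)})$ satisfies a (\ref{eqn:balance})-type equation with $\hat a_i^{(k)}$ replacing the indicator $\mathbf{1}(i \in \mathcal{A})$, and $\hat a_j^{(k)} \in [0,1]$. Both hold at $k=1$ by feasibility of $(v,x,a)$ for (\ref{eqn:rp1}); the inductive step follows by subtracting $y^{\mathcal{A}_k}$ times the $v^{\mathcal{A}_k}$ balance equation from the $\hat v^{(k)}$ balance equation and dividing through by $1 - y^{\mathcal{A}_k}$, reading off the balance equation for $(\hat v^{(k+1)}, \hat a^{(k+1)})$ from the Step~4 updates. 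The main technical obstacle is showing $y^{\mathcal{A}_k} \le 1$ at every iteration, which is needed to make the division in Step~4 valid and to keep $\hat v^{(k+1)} \ge 0$. My plan is to obtain this via a coupling of the two induced choice Markov chains: because the per-visit purchase probability $\mathbf{1}(j \in \mathcal{A}_k) Q_j(x_j)$ under pure $\mathcal{A}_k$ dominates $\hat a_j^{(k)} Q_j(x_j)$ pointwise (the definition of $\mathcal{A}_k$ gives $\hat a_j^{(k)} = 0$ outside $\mathcal{A}_k$, which may be taken to hold without loss of generality even on the set $\hat v_j^{(k)} = 0$), a coupling on common $\theta$- and $\rho$-randomness shows that every purchase of the ``hat'' chain occurs at the same visit as a purchase of the pure chain, yielding
\[
\sum_{j \in \mathcal{A}_k} Q_j(x_j)\, \hat a_j^{(k)} \hat v_j^{(k)} \;\le\; \sum_{j \in \mathcal{A}_k} Q_j(x_j)\, v_j^{\mathcal{A}_k}.
\]
Since $Q_j(x_j) > 0$, at least one $j \in \mathcal{A}_k$ must satisfy $\hat a_j^{(k)} \hat v_j^{(k)} \le v_j^{\mathcal{A}_k}$, hence $y^{\mathcal{A}_k} \le 1$. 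Termination in at most $J+1$ iterations is then easy: whenever Step~4 runs, the minimizer $j^*$ from Step~3 satisfies $\hat a_{j^*}^{(k+1)} \hat v_{j^*}^{(k+1)} = 0$ and Step~4 sets $\hat a_j^{(k+1)} = 0$ for every $j \notin \mathcal{A}_k$, so $\mathcal{A}_{k+1} \subsetneq \mathcal{A}_k$; starting from $|\mathcal{A}_1| \le J$, the algorithm must terminate via Step~1 or Step~3 in no more than $J+1$ iterations.

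To finish, I would unroll Step~4 by writing it as $\hat v_j^{(k)} = y^{\mathcal{A}_k} v_j^{\mathcal{A}_k} + (1 - y^{\mathcal{A}_k}) \hat v_j^{(k+1)}$ and analogously for $\hat a_j^{(k)} \hat v_j^{(k)}$. A telescoping sum yields $\sum_{k=1}^K w^{\mathcal{A}_k} = 1 - \prod_{l=1}^K (1 - y^{\mathcal{A}_l}) = 1$ since the terminal iteration has $y^{\mathcal{A}_K} = 1$. The one delicate point in the unrolling is the boundary identity $\hat v_j^{(K)} = v_j^{\mathcal{A}_K}$: in the Step~1 termination case $\mathcal{A}_K = \varnothing$, it follows because $\hat a_j^{(K)} \hat v_j^{(K)} = 0$ for every $j$ reduces the $\hat v^{(K)}$ balance equation to the balance equation of $v^{\varnothing}$, and uniqueness from Remark~\ref{rmk:balance-unique} yields the claim; in the Step~3 termination case $y^{\mathcal{A}_K} = 1$, the sales inequality above combined with the reverse pointwise inequality $\hat a_j^{(K)} \hat v_j^{(K)} \ge v_j^{\mathcal{A}_K}$ for $j \in \mathcal{A}_K$ (implied by $y^{\mathcal{A}_K} = 1$) forces elementwise equality $\hat a_j^{(K)} \hat v_j^{(K)} = \mathbf{1}(j \in \mathcal{A}_K) v_j^{\mathcal{A}_K}$ for all $j$, after which the $\hat v^{(K)}$ and $v^{\mathcal{A}_K}$ balance equations coincide and Remark~\ref{rmk:balance-unique} again yields $\hat v^{(K)} = v^{\mathcal{A}_K}$. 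Substituting the resulting mixture identities into the (\ref{eqn:cbp2}) objective, resource constraints, and simplex constraint on $w$ then completes the proof.
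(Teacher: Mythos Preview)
Your proposal is correct and follows the same overall architecture as the paper: maintain inductively that $(\hat v^{(k)},\hat a^{(k)})$ satisfies a Balance-type equation with $\hat a^{(k)}\in[0,1]$, establish $y^{\mathcal{A}_k}\le 1$, deduce the strict nesting $\mathcal{A}_{k+1}\subsetneq\mathcal{A}_k$ to get termination, and then unroll Step~4 into the mixture identities that transfer feasibility and the objective value from (\ref{eqn:rp1}) to (\ref{eqn:cbp2}).

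The one substantive difference is the argument for $y^{\mathcal{A}_k}\le 1$. The paper argues by contradiction and linear algebra: assuming $\hat a_j^{(k)}\hat v_j^{(k)}>v_j^{\mathcal{A}_k}$ for every $j\in\mathcal{A}_k$, subtracting the two balance equations yields $(I-\rho^\top)(v^{\mathcal{A}_k}-\hat v^{(k)})\ge 0$ componentwise, and nonnegativity of $(I-\rho^\top)^{-1}$ then gives $v^{\mathcal{A}_k}\ge\hat v^{(k)}$, contradicting $\hat a^{(k)}\le 1$. Your route is probabilistic: couple the two chains on common $\theta$- and $\rho$-randomness, observe that the pure-$\mathcal{A}_k$ chain purchases no later than the hat chain (a slightly cleaner phrasing than ``at the same visit''), and conclude the aggregate purchase inequality $\sum_{j\in\mathcal{A}_k}Q_j\hat a_j^{(k)}\hat v_j^{(k)}\le\sum_{j\in\mathcal{A}_k}Q_j v_j^{\mathcal{A}_k}$, which with $Q_j>0$ forces some ratio to be at most~$1$. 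Both arguments are valid; the paper's is purely algebraic and stays within the nonsingularity assumption on $I-\rho$, while yours exploits the Markov-chain interpretation and gives, as a bonus, the equality case you use to pin down the terminal identity $\hat a_j^{(K)}\hat v_j^{(K)}=\mathbf{1}(j\in\mathcal{A}_K)v_j^{\mathcal{A}_K}$ when $y^{\mathcal{A}_K}=1$. On that last point you are in fact more explicit than the paper, which asserts the final unrolling $a_jv_j=\sum_k\mathbf{1}(j\in\mathcal{A}_k)w^{\mathcal{A}_k}v_j^{\mathcal{A}_k}$ without separately justifying the boundary step.
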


Next, we summarize the results for (\ref{eqn:rp1}): 

\begin{theorem}
\label{thm:rp1-to-cbp2}
Let $(v^*, x^*, a^*)$ be an optimal solution to (\ref{eqn:rp1}). Let $(\mathbf{v}^*, x^*, w^*)$ be a feasible solution to (\ref{eqn:cbp2}) constructed as in Lemma \ref{lem:rp1-to-cbp2}. Then $(\mathbf{v}^*, x^*, w^*)$ is optimal to (\ref{eqn:cbp2}). On the other hand, let $(\mathbf{v}^*, x^*, w^*)$ be an optimal solution to (\ref{eqn:cbp2}), and let 
\begin{align*}
(v^*)_{j}^{\mathcal{A}} &= \sum_{\mathcal{A} \subseteq \mathcal{J}} (w^*)^{\mathcal{A}} (v^*)_{j}^{\mathcal{A}}
		\quad , \quad a_{j}^* = \sum_{\mathcal{A} \subseteq \mathcal{J}} 
				\mathbf{1}(j \in \mathcal{J}) (w^*)^{\mathcal{A}} (v^*)_{j}^{\mathcal{A}}
				 \Big/ \sum_{\mathcal{A} \subseteq \mathcal{J}} (w^*)^{\mathcal{A}} (v^*)_{j}^{\mathcal{A}}
		& \forall \ j \in \mathcal{J} 
\end{align*}
Then $(v^*, x^*, a^*)$ is an optimal solution to (\ref{eqn:rp1}). 
\end{theorem}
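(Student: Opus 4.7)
The plan is to establish that (\ref{eqn:rp1}) and (\ref{eqn:cbp2}) share the same optimal value, and that the two stated transformations map optimal solutions to optimal solutions. Both of the feasibility- and objective-preserving transformations needed for this are already set up: Lemma \ref{lem:rp1-to-cbp2} sends a feasible (\ref{eqn:rp1}) point to a feasible (\ref{eqn:cbp2}) point of equal objective, while the averaging construction stated in the paragraph just before Algorithm \ref{algo:reduce} sends a feasible (\ref{eqn:cbp2}) point to a feasible (\ref{eqn:rp1}) point of equal objective. Once both directions are in hand, the theorem reduces to a pair of short contradiction arguments.

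For the first direction, I would start from an optimal $(v^*,x^*,a^*)$ for (\ref{eqn:rp1}) and apply Lemma \ref{lem:rp1-to-cbp2} to obtain the feasible $(\mathbf{v}^*,x^*,w^*)$ for (\ref{eqn:cbp2}) of equal objective value. If this were suboptimal, some strictly better feasible $(\mathbf{v}',x',w')$ for (\ref{eqn:cbp2}) would exist; feeding $(\mathbf{v}',x',w')$ into the averaging map yields a feasible (\ref{eqn:rp1}) point of strictly larger objective than $(v^*,x^*,a^*)$, a contradiction. The converse direction is symmetric: start from an optimal $(\mathbf{v}^*,x^*,w^*)$ for (\ref{eqn:cbp2}), apply the averaging map to obtain the claimed $(v^*,x^*,a^*)$ for (\ref{eqn:rp1}), and invoke Lemma \ref{lem:rp1-to-cbp2} on any hypothetical strictly better (\ref{eqn:rp1}) solution to derive a contradiction.

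Before executing the above, I would verify the averaging map explicitly, since the paragraph preceding Algorithm \ref{algo:reduce} asserts but does not fully justify its feasibility. The key identity is that multiplying the (\ref{eqn:cbp2}) balance equation by $w^{\mathcal{A}}$, summing over $\mathcal{A} \subseteq \mathcal{J}$, and using $\sum_{\mathcal{A}} w^{\mathcal{A}} = 1$ produces exactly the (\ref{eqn:rp1}) balance equation when $v_j \defi \sum_{\mathcal{A}} w^{\mathcal{A}} v_j^{\mathcal{A}}$ and $a_j v_j \defi \sum_{\mathcal{A}} \mathbf{1}(j \in \mathcal{A}) w^{\mathcal{A}} v_j^{\mathcal{A}}$. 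The bound $a_j \in [0,1]$ then follows because the numerator defining $a_j$ is a partial sum of non-negative terms totaling $v_j$, and the resource-usage and objective expressions transfer term-by-term via the same weighted sum.

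The main obstacle I expect is bookkeeping rather than conceptual. First, the degenerate case $v_j = 0$ requires setting $a_j = 0$ by convention to avoid a zero-over-zero issue (consistent with the case split already used in Theorem \ref{lem:sp1 optimal}). Second, care is needed with the indicator functions when expanding the nonlinear balance constraint, to ensure that the terms $\mathbf{1}(i \in \mathcal{A}) \exp(\alpha_i - \beta_i x_i) v_i^{\mathcal{A}}$ aggregate to $a_i \exp(\alpha_i - \beta_i x_i) v_i$ and not to something off by a normalization factor. Nothing deeper than linearity of the (\ref{eqn:cbp2}) balance equation in $v^{\mathcal{A}}$ with $x$ held fixed is required, so the theorem is in essence a corollary of Lemma \ref{lem:rp1-to-cbp2} and the averaging construction, packaged with the two contradiction arguments above.
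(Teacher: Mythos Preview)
Your proposal is correct and matches the paper's approach: the theorem is stated as a summary of the two value-preserving transformations already established (the averaging map from (\ref{eqn:cbp2}) to (\ref{eqn:rp1}) in the paragraph before Algorithm~\ref{algo:reduce}, and Lemma~\ref{lem:rp1-to-cbp2} in the other direction), from which equality of optimal values and the optimality of the transformed solutions follow immediately. Your plan to spell out the averaging-map verification and handle the $v_j = 0$ degeneracy is a useful addition, since the paper asserts that step without detail.
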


\begin{remark}
\textbf{Scenario of Constant Assortment:} Consider the scenario when there is no resource constraint in (\ref{eqn:cbp2}). Clearly, its reduced problem (\ref{eqn:rp1}) will contain no resource constraint as well. Such a problem (\ref{eqn:rp1}) only differs from (\ref{eqn:sp2}) by a scalar $\overline{\lambda}$. Recall that in Section \ref{sec:joint}, we have shown that there must be an optimal solution to (\ref{eqn:sp2}) such that $a$ is integral. Thus, there must be an optimal solution with integral $a$ to (\ref{eqn:rp1}) when there is no resource constraint. By Algorithm \ref{algo:reduce}, such a solution translates to an optimal solution of (\ref{eqn:cbp2}) where $w^{\mathcal{A}}=1$ for one assortment $\mathcal{A} \in \mathcal{J}$, while $w^{\mathcal{A}'}=0$ for all other assortments $\mathcal{A}' \in \mathcal{J}$. In other words, when (\ref{eqn:cbp2}) has no resource constraint, it has an optimal solution with a constant assortment. 
\end{remark}


\subsection{Convex Conic Transformation}

Now, we have shown that (\ref{eqn:cbp2}) can be solved by solving (\ref{eqn:rp1}), which has a much smaller number of variables and constraints. It left to show that (\ref{eqn:rp1}) can be solved efficiently. We show that (\ref{eqn:rp1}) can be solved by solving the following convex conic optimization problem: 
\begin{subequations}
\begin{align}
\max_{v, \ d, \ u} \quad
	& \overline{\lambda} \sum_{j \in \mathcal{J}} \left(u_{j} - \psi_{j} d_{j} \right)
\tag{$\mathsf{RP}_2$}
\label{eqn:rp2} \\
\text{s.t.} \quad
& v_{j} \ \ = \ \ 
		\theta_{j} + \sum_{i \in \mathcal{J}} 
		\rho_{ij} (v_{i} - d_{i})
		& \forall \ j \in \mathcal{J} \label{eqn:rp2-balance} \\
& \overline{\lambda} \left(\sum_{j \in \mathcal{J}} \phi_{rj} d_{j} \right)
		\ \ \leq \ \ \varphi_{r}
		& \forall \ r \in \mathcal{R} \\
&(v_{j}, \ d_{j}, \ \beta_{j} u_{j} - \alpha_{j} d_{j}) 
		\ \ \in \ \ \mathcal{K}_{\text{exp}}
		& \forall \ j \in \mathcal{J} \label{eqn:rp2-cone} \\
& u_{j} \ \ \leq \ \ \overline{x}_{j} d_{j} 
		& \forall \ j \in \mathcal{J} \label{eqn:rp2-upper} \\
& u_{j} \ \ \geq \ \ \underline{x}_{j} d_{j} 
		& \forall \ j \in \mathcal{J} \label{eqn:rp2-lower}
\end{align}
\end{subequations}

More concretely, we have: 
\begin{lemma}
\label{lem:rp2 optimal}
If (\ref{eqn:rp1}) is feasible, then (\ref{eqn:rp2}) has an optimal solution.
\end{lemma}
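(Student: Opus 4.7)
The plan is to prove the lemma in two stages: (i) establish that feasibility of (\ref{eqn:rp1}) implies feasibility of (\ref{eqn:rp2}) through the same change of variables used for (\ref{eqn:sp3}), and (ii) show that the feasible region of (\ref{eqn:rp2}) is compact so that the continuous linear objective attains its maximum. Concretely, given any feasible $(v,x,a)$ for (\ref{eqn:rp1}), I would define $d_j \defi a_j \exp(\alpha_j - \beta_j x_j) v_j$ and $u_j \defi x_j d_j$, and verify that the balance equation rewrites exactly as (\ref{eqn:rp2-balance}), that the resource inequality becomes $\overline{\lambda} \sum_j \phi_{rj} d_j \leq \varphi_r$, and that $(v_j, d_j, \beta_j u_j - \alpha_j d_j) \in \mathcal{K}_{\exp}$ (the inequality becomes trivial when $d_j = 0$, and when $d_j > 0$ follows from $\ln(d_j/v_j) = \alpha_j - \beta_j x_j$). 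Hence (\ref{eqn:rp2}) is feasible.

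Next I would show that every feasible $(v,d,u)$ of (\ref{eqn:rp2}) lies in a bounded set. The definition of $\mathcal{K}_{\exp}$ forces $v_j \geq 0$ and $d_j \geq 0$. Combining the lower bound $u_j \geq \underline{x}_j d_j$ with the conic inequality gives, on the set $d_j > 0$,
\begin{align*}
\ln\!\left(\frac{v_j}{d_j}\right) \ \ \geq \ \ \beta_j \underline{x}_j - \alpha_j \ \ \geq \ \ 0,
\end{align*}
where the last inequality uses the model assumption $Q_j(x_j) = \exp(\alpha_j - \beta_j x_j) \in [0,1]$ on $[\underline{x}_j, \overline{x}_j]$. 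This yields $d_j \leq v_j$, hence $v_j - d_j \geq 0$. Substituting into the balance equation (\ref{eqn:rp2-balance}) gives $v \leq \theta + \rho^{\top} v$, so $v \leq (I - \rho^{\top})^{-1} \theta$; here the inverse exists and is entrywise nonnegative because $(I - \rho)^{-1} = \sum_{n=0}^{\infty} \rho^{n}$ by the assumptions on $\rho$. Therefore $v$ is bounded above, which in turn bounds $d$ (via $d \leq v$) and then $u$ (via $\underline{x}_j d_j \leq u_j \leq \overline{x}_j d_j$).

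Since $\mathcal{K}_{\exp}$ is closed and all remaining constraints are linear equalities or inequalities, the feasible region of (\ref{eqn:rp2}) is closed; combined with the boundedness just established, it is compact. The objective $\overline{\lambda}\sum_j (u_j - \psi_j d_j)$ is continuous, so it attains its maximum on this compact set, proving the lemma.

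The main obstacle I anticipate is the boundedness step, and specifically the need to invoke the assumption $Q_j(x_j) \in [0,1]$ to pin down $d_j \leq v_j$; without it, the conic inequality alone is not enough to keep $v$ from escaping to infinity via the balance equation. A secondary subtlety is handling the boundary case $d_j = 0$ uniformly in the correspondence with (\ref{eqn:rp1}), which is the same issue already addressed in the (\ref{eqn:sp3}) discussion and can be resolved by the same piecewise definition of $x_j$ (set $x_j = \underline{x}_j$ when $d_j = 0$).
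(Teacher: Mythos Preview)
Your argument is correct, but it proceeds along a different line from the paper's own proof.

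The paper establishes existence via conic duality. After the same feasibility step you give, it notes that the dual of (\ref{eqn:rp2}) differs from the dual (\ref{eqn:sd1}) of (\ref{eqn:sp3}) only by additional sign-constrained multipliers attached to the resource constraints. Setting those extra multipliers to zero and reusing the strictly interior dual point already constructed in the proof of Lemma~\ref{lem:sp3 optimal} yields a strictly feasible point for the dual of (\ref{eqn:rp2}); since the dual is also bounded (because the primal is feasible), the refined conic duality theorem then delivers a primal optimal solution.

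Your route is more self-contained: you argue directly that the feasible region of (\ref{eqn:rp2}) is compact, using the model assumption $Q_{j}(\underline{x}_{j}) \leq 1$ to force $d_{j} \leq v_{j}$ and the nonnegativity of $(I - \rho^{\top})^{-1}$ to bound $v$; Weierstrass then finishes. This avoids conic duality altogether and is arguably cleaner for the bare existence statement. The paper's approach, by contrast, recycles the dual construction from Lemma~\ref{lem:sp3 optimal} and gives strong duality for (\ref{eqn:rp2}) as a by-product, which can be useful if one later wants shadow prices on the resource constraints.
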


\begin{proof}
	See Appendix. 
\end{proof}

and
\begin{theorem}
\label{lem:cbp2 optimal}
If (\ref{eqn:rp1}) is feasible, then it has an optimal solution, which can be found by solving the convex conic program (\ref{eqn:rp2}): Let $(v^*,d^*,u^*)$ be an optimal solution to (\ref{eqn:rp2}). Let 
\begin{align*}
x_{j}^* \ \ &= \ \ 
\begin{cases}
		\underline{x}_{j} & \text{if } d_{j}^* = 0 \\
		u_{j}^* / d_{j}^* & \text{if } d_{j}^* > 0 \\
\end{cases}
		& \forall \ j \in \mathcal{J} 
\end{align*}
and let
\begin{align*}
a_{j} \ \ &= \ \ 
\begin{cases}
		0  & \text{if } d_{j}^* = 0 \\
		d_{j}^* / \exp\left( \alpha_{j} - \beta_{j} x_{j}^* \right) v_{j}^* & \text{if } d_{j}^* > 0 \\
\end{cases}
		& \forall \ j \in \mathcal{J} 
\end{align*}
Then $(v^*,x^*,a^*)$ is an optimal solution to (\ref{eqn:rp1}). 
\end{theorem}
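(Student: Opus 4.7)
The strategy is to mirror Theorem \ref{lem:sp1 optimal} step by step, treating the resource constraints as linear constraints in the new variable $d$. First I would introduce the choice probabilities $d_{j} := a_{j} \exp(\alpha_{j} - \beta_{j} x_{j}) v_{j}$, which rewrites the balance equations as $v_{j} = \theta_{j} + \sum_{i} \rho_{ij}(v_{i} - d_{i})$ and the resource constraints as $\overline{\lambda} \sum_{j} \phi_{rj} d_{j} \leq \varphi_{r}$. Because $a_{j} \in [0,1]$, $v_{j} \geq 0$, and $\exp(\alpha_{j} - \beta_{j} x_{j}) > 0$, the existence of an $a_{j} \in [0,1]$ such that $d_{j} = a_{j} \exp(\alpha_{j} - \beta_{j} x_{j}) v_{j}$ is equivalent to $0 \leq d_{j} \leq \exp(\alpha_{j} - \beta_{j} x_{j}) v_{j}$. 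This yields an auxiliary formulation analogous to (\ref{eqn:sp2pp}) from which $a$ has been eliminated without changing the objective.

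Next I would substitute $u_{j} := x_{j} d_{j}$ and show that the nonconvex inequality $d_{j} \leq \exp(\alpha_{j} - \beta_{j} x_{j}) v_{j}$, together with the $d_{j} = 0$ boundary case, is captured exactly by the exponential cone constraint (\ref{eqn:rp2-cone}): when $d_{j} > 0$ the cone membership reads $\beta_{j} u_{j} - \alpha_{j} d_{j} \leq d_{j} \ln(v_{j} / d_{j})$, which rearranges to $\ln(d_{j} / v_{j}) \leq \alpha_{j} - \beta_{j}(u_{j} / d_{j})$, i.e.\ precisely $d_{j} \leq \exp(\alpha_{j} - \beta_{j} x_{j}) v_{j}$ under the identification $x_{j} = u_{j}/d_{j}$. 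The bound constraints on $x_{j}$ then become the linear constraints $\underline{x}_{j} d_{j} \leq u_{j} \leq \overline{x}_{j} d_{j}$, giving exactly (\ref{eqn:rp2}).

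By Lemma \ref{lem:rp2 optimal}, feasibility of (\ref{eqn:rp1}) implies (\ref{eqn:rp2}) admits an optimal solution $(v^{*}, d^{*}, u^{*})$. To recover an optimal $(v^{*}, x^{*}, a^{*})$ for (\ref{eqn:rp1}), I would define $x^{*}$ and $a^{*}$ as in the theorem statement and verify each original constraint case-by-case. For each $j$ with $d_{j}^{*} > 0$, the cone constraint (\ref{eqn:rp2-cone}) yields $a_{j}^{*} \in [0, 1]$ and the bounds (\ref{eqn:rp2-upper})--(\ref{eqn:rp2-lower}) yield $x_{j}^{*} \in [\underline{x}_{j}, \overline{x}_{j}]$; by construction $d_{j}^{*} = a_{j}^{*} \exp(\alpha_{j} - \beta_{j} x_{j}^{*}) v_{j}^{*}$ and $u_{j}^{*} = x_{j}^{*} d_{j}^{*}$. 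The balance and resource constraints transfer unchanged since they are linear in $(v, d)$. In the reverse direction, the forward map $(v, x, a) \mapsto (v, d, u)$ with $d_{j} = a_{j} \exp(\alpha_{j} - \beta_{j} x_{j}) v_{j}$ and $u_{j} = x_{j} d_{j}$ certifies that any feasible solution of (\ref{eqn:rp1}) yields a feasible solution of (\ref{eqn:rp2}) with equal objective, so the two optimal values coincide and the constructed $(v^{*}, x^{*}, a^{*})$ is optimal for (\ref{eqn:rp1}).

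The main obstacle is the degenerate case $d_{j}^{*} = 0$, where the closure branch $\{(a_{1}, 0, a_{3}) : a_{1} \geq 0,\ a_{3} \leq 0\}$ of $\mathcal{K}_{\exp}$ is active and the relationship $d_{j} = a_{j} \exp(\alpha_{j} - \beta_{j} x_{j}) v_{j}$ cannot be literally inverted to recover $x_{j}$. One must check that in this branch $\beta_{j} u_{j}^{*} - \alpha_{j} d_{j}^{*} = \beta_{j} u_{j}^{*} \leq 0$ together with $u_{j}^{*} \geq \underline{x}_{j} d_{j}^{*} = 0$ forces $u_{j}^{*} = 0$ (using $\beta_{j} > 0$), so that the prescribed assignments $x_{j}^{*} = \underline{x}_{j}$ and $a_{j}^{*} = 0$ are consistent with every constraint of (\ref{eqn:rp1}) and contribute zero to both objectives. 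This is a direct adaptation of the argument used in Section \ref{sec:joint}; the resource constraints add no new difficulty because they are already linear in $d$.
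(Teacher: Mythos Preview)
Your proposal is correct and follows essentially the same approach as the paper: the paper's proof simply states that the argument of Theorem~\ref{lem:sp1 optimal} carries over verbatim because the only differences between (\ref{eqn:rp1})/(\ref{eqn:rp2}) and (\ref{eqn:sp2})/(\ref{eqn:sp3}) are the scalar $\overline{\lambda}$ and the resource constraints, which are linear in $d$ and do not interfere with the variable changes. Your write-up spells out those steps in more detail (including the careful handling of the $d_{j}^{*}=0$ boundary case), but the underlying argument is the same.
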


\begin{proof}
	Theorem \ref{lem:cbp2 optimal} can be shown through the identical steps we used to show Theorem \ref{lem:sp1 optimal}. The only differences between (\ref{eqn:rp1}) and (\ref{eqn:sp2}) (and between (\ref{eqn:rp2}) and (\ref{eqn:sp3})) are the scaling parameter $\overline{\lambda}$ and the resource constraints. None of the changes affect the transformation we applied to (\ref{eqn:sp2}) for obtaining (\ref{eqn:sp3})), and the proof steps in Section \ref{sec:joint} still hold. 
\end{proof}


\subsection{Convert to Special Cases}
\label{sec:sub:special}

The revenue management problem in \cite{feldman2017revenue} (with exogenous prices) can be solved as a special case of (\ref{eqn:cbp2}). 
To do this, we can simply set both $\overline{x}_{j}$ and $\underline{x}_{j}$ equal to the target price of product $j$ for every $j \in \mathcal{J}$ in (\ref{eqn:cbp2}). 
This change will be carried through (\ref{eqn:rp1}) and (\ref{eqn:rp2}). 
The tractability results we showed in this section hold automatically. 
Thus, the optimal solution we obtain for (\ref{eqn:cbp2}) will satisfy $x_{j} = \overline{x}_{j} = \underline{x}_{j}$. 

Next, we discuss how to solve the pricing problem in \cite{dong2019pricing} (with exogenous assortment) as special cases of problem (\ref{eqn:cbp2}). 
Without loss of generality, we assume that the offer set is $\mathcal{J}$ itself throughout time. 
Thus, we are solving (\ref{eqn:cbp2}) with an additional constraint $w^{\mathcal{J}} = 1$. 
In short, this can be done by first solving 
\begin{subequations}
\begin{align}
\max_{v, \ d, \ u} \quad
	& \overline{\lambda} \sum_{j \in \mathcal{J}} \left(u_{j} - \psi_{j} d_{j} \right)
\tag{$\mathsf{RP}_3$}
\label{eqn:rp3} \\
\text{s.t.} \quad
& v_{j} \ \ = \ \ 
		\theta_{j} + \sum_{i \in \mathcal{J}} 
		\rho_{ij} (v_{i} - d_{i})
		& \forall \ j \in \mathcal{J} \label{eqn:rp3-balance} \\
& \overline{\lambda} \left(\sum_{j \in \mathcal{J}} \phi_{rj} d_{j} \right)
		\ \ \leq \ \ \varphi_{r}
		& \forall \ r \in \mathcal{R} \\
&(v_{j}, \ d_{j}, \ \beta_{j} u_{j} - \alpha_{j} d_{j}) 
		\ \ \in \ \ \mathcal{K}_{\text{exp}}
		& \forall \ j \in \mathcal{J} \label{eqn:rp3-cone} \\
&(d_{j}, \ v_{j}, \ \alpha_{j} v_{j} - \beta_{j} \overline{x}_{j} v_{j}) 
		\ \ \in \ \ \mathcal{K}_{\text{exp}}
		& \forall \ j \in \mathcal{J} \label{eqn:rp3-cone-dummy} \\
& u_{j} \ \ \leq \ \ \overline{x}_{j} d_{j} 
		& \forall \ j \in \mathcal{J} \label{eqn:rp3-upper} \\
& u_{j} \ \ \geq \ \ \underline{x}_{j} d_{j} 
		& \forall \ j \in \mathcal{J} \label{eqn:rp3-lower}
\end{align}
\end{subequations}
instead of (\ref{eqn:rp2}), and then follow the steps in this section backwards to get the solutions to (\ref{eqn:rp1}) and (\ref{eqn:cbp2}). 
The details and proofs for approach can be found in \cite{shao2020tractable}. 

Note that the only difference between (\ref{eqn:rp2}) and (\ref{eqn:rp3}) is the additional conic constraint (\ref{eqn:rp3-cone-dummy}). 
Intuitively, increasing $u_{j}$ in (\ref{eqn:rp3}) gives us a better solution, and the only constraints that put upper bounds on $u_{j}$ are (\ref{eqn:rp3-cone}) and (\ref{eqn:rp3-upper}). 
Thus, one of the two constraints must hold tight at optimality. 
Now, without adding unnecessary restrictions, the conic constraint (\ref{eqn:rp3-cone-dummy}) gaurantees that if (\ref{eqn:rp3-cone}) is not tight, then (\ref{eqn:rp3-upper}) must not be tight.  
Thus, any optimal solution to (\ref{eqn:rp3}) must be on the boundary of (\ref{eqn:rp3-cone}). 
As a result, through Theorem \ref{lem:cbp2 optimal}, the solution we obtain to (\ref{eqn:rp1}) will have $a_{j} = 1 \ , \ j \in \mathcal{J}$. (In fact, by solving (\ref{eqn:rp3}), we are solving a modifed (\ref{eqn:rp1}) with an additional constraint $a_{j} = 1 \ , \ j \in \mathcal{J}$.)
Similarly, the solution we obtain to (\ref{eqn:cbp2}) will have $w^{\mathcal{J}} = 1$.

\section{Conclusion}
\label{sec:conclusion}

In this paper, we formulated the revenue management problem (\ref{eqn:cbp1}) under the Markov chain choice model with joint price and assortment decisions. We showed that (\ref{eqn:cbp1}) can be solved by solving a small convex conic program (\ref{eqn:rp2}). A practical result on (\ref{eqn:cbp1}) is that allowing prices to change over time does not improve profit. Thus, it is enough to use a constant price vector throughout the planning period. Special cases of (\ref{eqn:cbp1}), including (1) the static joint optimization problem (\ref{eqn:sp1}), (2) the ``pricing only'' problem, and (3) the ``assortment only'' problem (both are covered in \ref{sec:sub:special}), are discussed in this paper. The concavity results on (\ref{eqn:cbp1}) extend to the special cases as well.

\newpage



\bibliography{library}
\bibliographystyle{plainnat}


 \begin{appendix}

 \newpage
 

\section{Proofs}

\subsection{Proof of Lemma \ref{lem:sp2 integral}}

{\color{purple}
\begin{manuallemma}{\ref{lem:sp2 integral}}
If $(v^*, x^*, a^*)$ is an optimal solution to (\ref{eqn:sp2}), then $a^*$ is integral, and $(v^*, x^*, a^*)$ is also optimal to (\ref{eqn:sp1}). 
\end{manuallemma}
}

\begin{proof}
(\ref{eqn:sp2}) is equivalent to
\begin{subequations}
\begin{align}
\max_{x} \quad
	& G(x)
\tag{$\mathsf{OP}$}
\label{eqn:inp1} \\
\text{s.t.} \quad
& x_{j} \ \ \leq \ \ \overline{x}_{j}
		& \forall \ j \in \mathcal{J} \nonumber \\
& x_{j} \ \ \geq \ \ \underline{x}_{j}
		& \forall \ j \in \mathcal{J} \nonumber 
\end{align}
\end{subequations}
where 
\begin{subequations}
\begin{align}
G(x) \ \ \defi \ \ 
\max_{v, \ a} \quad
	& \sum_{j \in \mathcal{J}} \left(x_{j} - \psi_{j}\right) 
	a_{j} \exp\left( \alpha_{j} - \beta_{j} x_{j} \right) v_{j}
\tag{$\mathsf{INP}_1$}
\label{eqn:inp1} \\
\text{s.t.} \quad
& v_{j} \ \ = \ \ 
		\theta_{j} + \sum_{i \in \mathcal{J}} 
		\left[1 - a_{i} \exp\left( \alpha_{i} - \beta_{i} x_{i} \right) \right] \rho_{ij} v_{i}
		& \forall \ j \in \mathcal{J} \nonumber \\
& a_{j} \ \ \in \ \ [0,1]
		& \forall \ j \in \mathcal{J} \nonumber 
\end{align}
\end{subequations}
Consider a variable change $r_j = a_{j} v_{j} \ , \ j \in \mathcal{J}$. We have
\begin{subequations}
\begin{align}
G(x) \ \ = \ \ 
\max_{v, \ a} \quad
	& \sum_{j \in \mathcal{J}} \left(x_{j} - \psi_{j}\right) 
	r_{j} \exp\left( \alpha_{j} - \beta_{j} x_{j} \right) 
\tag{$\mathsf{INP}_2$}
\label{eqn:inp2} \\
\text{s.t.} \quad
& v_{j} \ \ = \ \ 
		\theta_{j} + \sum_{i \in \mathcal{J}} 
		\left[v_{i} - r_{i} \exp\left( \alpha_{i} - \beta_{i} x_{i} \right) \right] \rho_{ij} 
		& \forall \ j \in \mathcal{J} \nonumber \\
& r_{j} \ \ \in \ \ [0,v_{j}]
		& \forall \ j \in \mathcal{J} \nonumber 
\end{align}
\end{subequations}
Now, (\ref{eqn:inp2}) is a linear program with $2|\mathcal{J}|$ variables, which means $2|\mathcal{J}|$ constraints in (\ref{eqn:inp2}) need to be tight at optimality. 
Thus, for any $j \in \mathcal{J}$, either $r_{j} = v_{j}$, or $r_{j} = 0$. 
Enough to conclude. 

\end{proof}

 \newpage
 

\subsection{Proof of Lemma \ref{lem:sp3 optimal}}

{\color{purple}
\begin{manuallemma}{\ref{lem:sp3 optimal}}
(\ref{eqn:sp3}) has an optimal solution.
\end{manuallemma}
}

\begin{proof}
The dual problem of (\ref{eqn:sp3}) is be given by: 
\begin{subequations}
\begin{align}
\min_{\eta, \, \pi, \, \overline{\nu}, \, \underline{\nu}} \quad & \sum_{j \in \mathcal{J}} \theta_{j} \eta_{j} 
\tag{\textsf{$\mathsf{SD_1}$}} 
\label{eqn:sd1} \\
\text{s.t.} \quad
& \pi_{j1} - \eta_{j} + \sum_{i \in \mathcal{J}} \rho_{ji} \eta_{i} \ \ = \ \ 0
		& \forall \ j \in \mathcal{J} \label{eqn:sd1-v} \\
& \pi_{j2} - \alpha_{j} \pi_{j3} - \sum_{i \in \mathcal{J}} \rho_{ji} \eta_{i} 
		+ \overline{\nu}_{j} \overline{x}_{j} - \underline{\nu}_{j} \underline{x}_{j} 
		\ \ = \ \ \psi_{j}
		& \forall \ j \in \mathcal{J} \\
& \beta_{j} \pi_{j3} + \underline{\nu}_{j} - \overline{\nu}_{j} \ \ = \ \ - 1
		& \forall \ j \in \mathcal{J} \\
& \pi_{j} \ \ \in \ \ \mathcal{K}_{\exp}^*
		& \forall \ j \in \mathcal{J} \\
& \overline{\nu}_{j} \ \ \ge \ \ 0
		& \forall \ j \in \mathcal{J} \\
& \underline{\nu}_{j} \ \ \ge \ \ 0
		& \forall \ j \in \mathcal{J}
\end{align}
\end{subequations}
where
\begin{align*}
\mathcal{K}_{\text{exp}}^* \defi& \ \text{closure} \ \Big\{ (a_{1}', a_{2}', a_{3}') \ : \ a_{1}' \geq -a_{3}' \ e^{a_{2}'/a_{3}' - 1}
		\ , \ a_{1}' > 0 \ , \ a_{3}' < 0 \Big\} \\
=& \Big\{ (a_{1}', a_{2}', a_{3}') \ : \ a_{1}' \geq -a_{3}' \ e^{a_{2}'/a_{3}' - 1} \ , \ a_{1}' > 0 \ , \ a_{3}' < 0 \Big\} \cup
		\Big\{ (a_{1}', a_{2}', 0) \ : \ a_{1}' \geq 0 \ , \ a_{2}' \geq 0 \Big\}
\end{align*}
is the dual cone to $\mathcal{K}_{\text{exp}}$. 
In (\ref{eqn:sd1}) $\eta$ is the dual variable corresponding to (\ref{eqn:sp3-balance}), $\pi$ is the dual variables corresponding to (\ref{eqn:sp3-cone}), $\overline{\nu}$ os the dual variables corresponding to (\ref{eqn:sp3-upper}), and $\underline{\nu}$ is the dual variables corresponding to (\ref{eqn:sp3-lower}). 

The refined conic duality theorem (see \cite{ben2001lectures}) states that (\ref{eqn:sp3}) has an optimal solution if its dual problem is bounded and is ``strictly feasible'' (i.e. it has a feasible solution in the interior of the cones). 
Since (\ref{eqn:sp3}) is feasible, we know that the dual problem is bounded below. 
Thus, we only need to show that the dual problem has a feasible solution such that the conic variables are all in the interior of $\mathcal{K}_{\text{exp}}^*$. 

Indeed. Let
\begin{align*}
& \pi_{j1} \ \ = \ \ 1 + \exp(- \psi_{j} - \alpha_{j} - 1) \ \ > \ \ 0
		& \forall \ j \in \mathcal{J} 
\end{align*}
and let $\eta$ be the unique solution to (\ref{eqn:sd1-v}) given the values of $\pi_{j1}$. 
Recall that $I - \rho$ is non-singular and has nonnegative entries. 
Thus, $\eta = (1 - \rho)^{-1} \pi_{\cdot 1} \geq 0$. 
In addition, let
\begin{align*}
& \pi_{j2} \ \ = \ \ \psi_{j} + \alpha_{j} + \sum_{i \in \mathcal{J}} \rho_{ji} \eta_{i}
		\ \ \geq \ \ \psi_{j} + \alpha_{j}
		& \forall \ j \in \mathcal{J} \\
& \pi_{j3} \ \ = \ \ -1 \ \ < \ \ 0
		& \forall \ j \in \mathcal{J} \\
& \overline{\nu}_{j} \ \ = \ \ 0
		& \forall \ j \in \mathcal{J} \\
& \underline{\nu}_{j} \ \ = \ \ 0
		& \forall \ j \in \mathcal{J}
\end{align*}
Clearly, the constructed solution $(\eta, \pi, \overline{\nu}, \underline{\nu})$ is feasible to (\ref{eqn:sd1}). 
In additon, we have
\begin{align*}
\pi_{j1} \ \ &= \ \ 1 + \exp(- \psi_{j} - \alpha_{j} - 1) \\
&\geq \ \ 1 + \exp(- \pi_{j2} - 1) \\
&= \ \ 1 - \pi_{j3} \cdot 
		\exp\left(\frac{\pi_{j2}}{\pi_{j3}} - 1\right) \\
&> \ \ - \pi_{j3} \cdot 
		\exp\left(\frac{\pi_{j2}}{\pi_{j3}} - 1\right) 
\end{align*}
Thus, $\pi_{j}$ is in the interior of $\mathcal{K}_{\text{exp}}^*$ for all $j \in \mathcal{J}$.

\end{proof}

 \newpage


\subsection{Proof of Theorem \ref{thm:rm-constant-price}}

Without loss of generality, 

{\color{purple}
\begin{manualtheorem}{\ref{thm:rm-constant-price}}
	The optimal objective value of (\ref{eqn:cbp1}) does not exceed that of (\ref{eqn:cbp2}). 
\end{manualtheorem}
}

\begin{proof}

Withou loss of generality, we can rescale (\ref{eqn:cbp1}) into:
\begin{subequations}
\begin{align}
\max_{\mathcal{A}(t), \ v(t), \ x(t)} \quad
	& \overline{\lambda} \int_{t=0}^{1} 
			\left( \sum_{j \in \mathcal{A}(t)} \left(x_{j}(t) - \psi_{j}\right) 
			\exp\left( \alpha_{j} - \beta_{j} x_{j}(t) \right) v_{j}(t)  \right) \diff t
\tag{$\mathsf{CBP}_1'$}
\label{eqn:cbp1p} \\
\text{s.t.} \quad
& \overline{\lambda} \int_{t=0}^{1} \left(\sum_{j \in \mathcal{A}(t)} \phi_{rj} 
		\exp\left( \alpha_{j} - \beta_{j} x_{j}(t) \right) v_{j}(t) \right) \diff t
		\ \ \leq \ \ \varphi_{r}
		& \forall \ r \in \mathcal{R} \nonumber \\
& v_{j}(t)  \ \ = \ \ 
		\theta_{j} + \sum_{i \in \mathcal{J}} 
		\left[1 - \mathbf{1}(i \in \mathcal{A}(t)) \exp\left( \alpha_{i} - \beta_{i} x_{i}(t) \right) \right] 
		\rho_{ij} v_{i}(t) 
		& \forall \ j \in \mathcal{J} \ , \ t \in [0, 1] \nonumber \\
& x_{j}(t) \ \ \leq \ \ \overline{x}_{j}
		& \forall \ j \in \mathcal{J} \ , \ t \in [0, 1] \nonumber \\
& x_{j}(t) \ \ \geq \ \ \underline{x}_{j}
		& \forall \ j \in \mathcal{J} \ , \ t \in [0, 1] \nonumber \\
& \mathcal{A}(t) \ \ \subseteq \ \ \mathcal{J}
		& \forall \ t \in [0, 1] \nonumber 
\end{align}
\end{subequations}
Consider any solution $(\mathcal{A}'(t), v'(t), x'(t))$ that is feasible to (\ref{eqn:cbp1p}). Let 
\begin{align*}
v_{j}^*(t) \ \ &= \ \ v_{j}^{\mathcal{A}} \\
x_{j}^*(t) \ \ &= \ \ x_{j}^{\mathcal{A}} 
\end{align*}
for every $\mathcal{A} \subseteq \mathcal{J}$, and $j, t$ such that $j \in \mathcal{J}$ and $\mathcal{A}'(t) = \mathcal{A}$, where
\begin{align*}
v_{j}^{\mathcal{A}} \ \ &= \ \ \frac{1}{\left|\{s : \mathcal{A}'(s) = \mathcal{A}\}\right|}
		\int_{s \in \{s : \mathcal{A}'(s) = \mathcal{A}\}} v_{j}'(s) \ \diff s \\
x_{j}^{\mathcal{A}} \ \ &= \ \ \frac{\alpha_{j}}{\beta_{j}} - \frac{1}{\beta_{j}} \ln \left( 
		\frac{\int_{s \in \{s : \mathcal{A}'(s) = \mathcal{A}\}}
		\exp\left( \alpha_{j} - \beta_{j} x_{j}'(s) \right) v_{j}'(s) \ \diff s}
		{\int_{s \in \{s : \mathcal{A}'(s) = \mathcal{A}\}} v_{j}'(s) \ \diff s} \right) 
\end{align*}
Clearly, in this new solution $(\mathcal{A}'(t), v^*(t), x^*(t))$, the values of $v^*(t)$ and $x^*(t)$ are constants under each distinct value of $\mathcal{A}'(t)$. 
We will show that $(\mathcal{A}'(t), v^*(t), x^*(t))$ is also feasible to (\ref{eqn:cbp1p}), and the objective value of (\ref{eqn:cbp1p}) at $(\mathcal{A}'(t), v^*(t), x^*(t))$ is equal to or better than that at $(\mathcal{A}'(t), v'(t), x'(t))$. 
Indeed. We have
\begin{align}
\begin{split}
& \overline{\lambda} \int_{t=0}^{1} \left(\sum_{j \in \mathcal{A}'(t)} \phi_{rj} 
		\exp\left( \alpha_{j} - \beta_{j} x_{j}'(t) \right) v_{j}'(t) \right) \diff t \\
& = \ \ \overline{\lambda} \sum_{\mathcal{A} \subseteq \mathcal{J}} \sum_{j \in \mathcal{A}} \phi_{rj} 
		\left( \int_{t \in \{t : \mathcal{A}'(t) = \mathcal{A}\}}
		\exp\left( \alpha_{j} - \beta_{j} x_{j}'(t) \right) v_{j}'(t) \diff t \right) \\
& = \ \ \overline{\lambda} \sum_{\mathcal{A} \subseteq \mathcal{J}} \sum_{j \in \mathcal{A}} \phi_{rj} 
		\exp\left( \alpha_{j} - \beta_{j} x_{j}^{\mathcal{A}} \right) 
		\left( \int_{t \in \{t : \mathcal{A}'(t) = \mathcal{A}\}} v_{j}'(t) \diff t \right) \\
& = \ \ \overline{\lambda} \sum_{\mathcal{A} \subseteq \mathcal{J}} \sum_{j \in \mathcal{A}} \phi_{rj} 
		\exp\left( \alpha_{j} - \beta_{j} x_{j}^{\mathcal{A}} \right) 
		\left( \int_{t \in \{t : \mathcal{A}'(t) = \mathcal{A}\}} v_{j}^*(t) \diff t \right) \\
& = \ \ \overline{\lambda} \sum_{\mathcal{A} \subseteq \mathcal{J}} \sum_{j \in \mathcal{A}} \phi_{rj} 
		\left( \int_{t \in \{t : \mathcal{A}'(t) = \mathcal{A}\}}
		\exp\left( \alpha_{j} - \beta_{j} x_{j}^*(t) \right) v_{j}^*(t) \diff t \right) \\
& = \ \ \overline{\lambda} \int_{t=0}^{1} \left(\sum_{j \in \mathcal{A}'(t)} \phi_{rj} 
		\exp\left( \alpha_{j} - \beta_{j} x_{j}^*(t) \right) v_{j}^*(t) \right) \diff t 
\label{eqn:integral-cbp1}
\end{split}
\end{align}
Thus, the resource constraints hold at $(\mathcal{A}'(t), v^*(t), x^*(t))$. Similarly, we can show that the balance equations hold at $(\mathcal{A}'(t), v^*(t), x^*(t))$ by taking integrals on both sides of the equations over $t$. In addition, we have
\begin{align*}
\exp\left( \alpha_{j} - \beta_{j} x_{j}^*(t) \right) v_{j}^*(t)
\ \ &= \ \ \frac{1}{\left|\{s : \mathcal{A}'(s) = \mathcal{A}\}\right|}
		\exp\left( \alpha_{j} - \beta_{j} x_{j}^*(t) \right)
		\int_{s \in \{s : \mathcal{A}'(s) = \mathcal{A}\}} v_{j}'(s) \ \diff s \\
\ \ &= \ \ \frac{1}{\left|\{s : \mathcal{A}'(s) = \mathcal{A}\}\right|}
		\int_{s \in \{s : \mathcal{A}'(s) = \mathcal{A}\}}
		\exp\left( \alpha_{j} - \beta_{j} x_{j}'(s) \right) v_{j}'(s) \ \diff s \\
\ \ &\leq \ \ \frac{1}{\left|\{s : \mathcal{A}'(s) = \mathcal{A}\}\right|}
		\exp\left( \alpha_{j} - \beta_{j} \overline{x}_{j} \right)
		\int_{s \in \{s : \mathcal{A}'(s) = \mathcal{A}\}} v_{j}'(s) \ \diff s \\
\ \ &= \ \ \exp\left( \alpha_{j} - \beta_{j} \overline{x}_{j} \right) v_{j}^*(t) \\
\implies \ \ x_{j}^*(t) \ \ &\leq \ \ \overline{x}_{j}
\end{align*}
and similarly, 
\begin{align*}
x_{j}^*(t) \ \ &\geq \ \ \underline{x}_{j}
\end{align*}
for every $\mathcal{A} \subseteq \mathcal{J}$, and $j, t$ such that $j \in \mathcal{J}$ and $\mathcal{A}'(t) \subseteq \mathcal{A}$. 
Thus, $(\mathcal{A}'(t), v^*(t), x^*(t))$ is feasible to (\ref{eqn:cbp1p}). 
It left to show that $(\mathcal{A}'(t), v^*(t), x^*(t))$ has a similar or better objective value than $(\mathcal{A}'(t), v'(t), x'(t))$. 
Similar to (\ref{eqn:integral-cbp1}), we have
\begin{align*}
& \int_{t=0}^{1} \left(\sum_{j \in \mathcal{A}'(t)} \psi_{rj} 
		\exp\left( \alpha_{j} - \beta_{j} x_{j}'(t) \right) v_{j}'(t) \right) \diff t 
\ \ = \ \ \int_{t=0}^{1} \left(\sum_{j \in \mathcal{A}'(t)} \psi_{rj} 
		\exp\left( \alpha_{j} - \beta_{j} x_{j}^*(t) \right) v_{j}^*(t) \right) \diff t 
\end{align*}
Meanwhile, for each $j \in \mathcal{J}$, let $d_{j}'(t) \defi \exp\left( \alpha_{j} - \beta_{j} x_{j}'(t) \right) v_{j}'(t)$ for any $t \in [0,1]$, and let $d_{j}^*(t) = \exp\left( \alpha_{j} - \beta_{j} x_{j}^*(t) \right) v_{j}^*(t)$ for any $\mathcal{A} \subseteq \mathcal{J}$. 
Thus, we have $ \alpha_{j} - \beta_{j} x_{j}'(t) = d_{j}'(t) \ln\left(d_{j}'(t) / v_{j}'(t)\right)$ if $v_{j}'(t) > 0$, and $\alpha_{j} - \beta_{j} x_{j}^*(t) =  d_{j}^*(t) \ln\left(d_{j}^*(t) / v_{j}^*(t)\right)$ if $v_{j}^*(t) > 0$. This implies: 
\begin{align*}
& \int_{s \in \{s : \mathcal{A}'(s) = \mathcal{A}\}} 
		\left( \alpha_{j} - \beta_{j} x_{j}'(s) \right)
		\exp\left( \alpha_{j} - \beta_{j} x_{j}'(s) \right) v_{j}'(s) \ \diff s \\
& \ \ = \ \ \int_{s \in \{s : \mathcal{A}'(s) = \mathcal{A}\}} 
		d_{j}'(s) \ln\left(\frac{d_{j}'(s)}{v_{j}'(s)}\right) \ \diff s \\
& \ \ \geq \ \ \left( \int_{s \in \{s : \mathcal{A}'(s) = \mathcal{A}\}} d_{j}'(s) \diff s \right)
		\ln\left(\frac{\int_{s \in \{s : \mathcal{A}'(s) = \mathcal{A}\}} d_{j}'(s) \diff s}
		{\int_{s \in \{s : \mathcal{A}'(s) = \mathcal{A}\}} v_{j}'(s) \diff s}\right) \\
& \ \ = \ \ d_{j}^*(t) 
		\ln\left(\frac{d_{j}^*(t)}{v_{j}^*(t)}\right)
		\ \ = \ \ \left( \alpha_{j} - \beta_{j} x_{j}^*(t) \right)
		\exp\left( \alpha_{j} - \beta_{j} x_{j}^*(t) \right) v_{j}^*(t) \\
& \int_{t \in \{t : \mathcal{A}'(s) = \mathcal{A}\}} 
		\left( \alpha_{j} - \beta_{j} x_{j}^*(t) \right)
		\exp\left( \alpha_{j} - \beta_{j} x_{j}^*(t) \right) v_{j}^*(t) \ \diff t 
\end{align*}
for every $\mathcal{A} \subseteq \mathcal{J}$, and $j, t$ such that $j \in \mathcal{J}$ and $\mathcal{A}'(t) \subseteq \mathcal{A}$. 
Thus, the objective value of (\ref{eqn:cbp1p}) at $(\mathcal{A}'(t), v^*(t), x^*(t))$ is equal to or better than that at $(\mathcal{A}'(t), v'(t), x'(t))$. 
In order words, there is no benefit of changing prices under each assortment. 

Now consider this feasible solution $(\mathcal{A}'(t), v^*(t), x^*(t))$ to (\ref{eqn:cbp1p}), where the values of $v^*(t)$ and $x^*(t)$ are constants under each distinct value of $\mathcal{A}'(t)$. 
Let $w^{\mathcal{A}} \defi \left|\{t : \mathcal{A}'(t) = \mathcal{A}\}\right|$ for every $\mathcal{A} \subseteq \mathcal{J}$. 
We construct a solution $(v,d,u)$ to (\ref{eqn:rp2}) as: 
\begin{align*}
v_{j} &= \sum_{\mathcal{A} \subseteq \mathcal{J}} w^{\mathcal{A}} v_{j}^{\mathcal{A}}
		& \forall \ j \in \mathcal{J} \\
d_{j} &= \sum_{\mathcal{A} \subseteq \mathcal{J}} 
		\mathbf{1} (j \in \mathcal{A}) w^{\mathcal{A}} 
		\exp\left( \alpha_{j} - \beta_{j} x_{j}^{\mathcal{A}} \right) v_{j}^{\mathcal{A}} 
		& \forall \ j \in \mathcal{J} \\
u_{j} &= \sum_{\mathcal{A} \subseteq \mathcal{J}} w^{\mathcal{A}} x_{j}^{\mathcal{A}} 
		\exp\left( \alpha_{j} - \beta_{j} x_{j}^{\mathcal{A}} \right) v_{j}^{\mathcal{A}}
		& \forall \ j \in \mathcal{J} 
\end{align*}
Clearly, the objective value of (\ref{eqn:rp2}) at $(v,d,u)$ is equal to that of (\ref{eqn:cbp1p}). 
The resources constraints, the balance equations, and the box constraints of (\ref{eqn:rp2}) all hold at $(v,d,u)$. 
In addition: 
\begin{align*}
d_{j} \ln\left( \frac{d_{j}}{v_{j}} \right) 
& \ \ \leq \ \ \sum_{\mathcal{A} \subseteq \mathcal{J} : \ j \in \mathcal{A}} w^{\mathcal{A}} 
		\exp\left( \alpha_{j} - \beta_{j} x_{j}^{\mathcal{A}} \right) v_{j}^{\mathcal{A}} 
		\ln\left( \frac{ \exp\left( \alpha_{j} - \beta_{j} x_{j}^{\mathcal{A}} \right) v_{j}^{\mathcal{A}}}{v_{j}^{\mathcal{A}}} \right) \\
& \ \ \leq \ \ \sum_{\mathcal{A} \subseteq \mathcal{J} : \ j \in \mathcal{A}} w^{\mathcal{A}} 
		\left( \alpha_{j} - \beta_{j} x_{j}^{\mathcal{A}} \right)
		\exp\left( \alpha_{j} - \beta_{j} x_{j}^{\mathcal{A}} \right) v_{j}^{\mathcal{A}} \\
& \ \ \leq \ \ \alpha_{j} d_{j} - \beta_{j} u_{j}
\end{align*}
for each $j \in \mathcal{J}$ such that $v_{j}, d_{j} > 0$. Thus, the conic constraints in (\ref{eqn:rp2}) also hold at $(v,d,u)$, which means $(v,d,u)$ is feasible to (\ref{eqn:rp2}), with an objective value as good as $(\mathcal{A}'(t), v^*(t), x^*(t))$ to (\ref{eqn:cbp1p}). 
Using the results in Section \ref{sec:network}, we can then construct a feasible solution to (\ref{eqn:cbp2}), with an objective value as good as $(\mathcal{A}'(t), v^*(t), x^*(t))$ to (\ref{eqn:cbp1p}). 

As a summary, the optimal objective value of (\ref{eqn:cbp1p}) does not exceed that of (\ref{eqn:cbp2}). 
\end{proof}

 \newpage


\subsection{Proof of Lemma \ref{lem:rp1-to-cbp2}}

{\color{purple}
\begin{manuallemma}{\ref{lem:rp1-to-cbp2}}
Let $(v, x, a)$ be a feasible solution to (\ref{eqn:rp1}). 
Using $(v, x, a)$ as the input, Algorithm \ref{algo:reduce} stops after at most $J+1$ iterations. 
Let $(\mathcal{A}_{k} : k = 1, \dots, K)$, $(v^{\mathcal{A}_{k}} : k = 1, \dots, K)$ and $(y^{\mathcal{A}_{k}} : k = 1, \dots, K)$ denote the output of Algorithm \ref{algo:reduce} using $(v, x, a)$ as the input, where $K$ is the number of iterations after which the algorithm stops. 
In addition, let 
\begin{align*}
w^{\mathcal{A}_{k}} &= (1 - y^{\mathcal{A}_1}) \cdots (1 - y^{\mathcal{A}_{k-1}}) y^{\mathcal{A}_{k}}
		\quad , \quad k = 1, \dots, K 
\end{align*}
let $w^{\mathcal{A}} = 0$ for any $\mathcal{A} \subseteq \mathcal{J}$ such that $\mathcal{A} \notin \{\mathcal{A}_1, \dots, \mathcal{A}_{k}\}$, and let $v^{\mathcal{A}}$ be the unique (see Remark \ref{rmk:balance-unique}) solution of the system
\begin{align*}
v_{j}^{\mathcal{A}}  
&\ \ = \ \ \theta_{j} + \sum_{i \in \mathcal{J}} 
		\left[1 - \mathbf{1}(i \in \mathcal{A}) \exp\left( \alpha_{i} - \beta_{i} x_{i} \right) \right] 
		\rho_{ij} v_{i}^{\mathcal{A}} 
		& \forall \ j \in \mathcal{J} 
\end{align*}
for every $\mathcal{A} \subseteq \mathcal{J}$ given the value of $x$. 
Then $(\mathbf{v}, x, w)$ is a feasible solution to (\ref{eqn:cbp2}) with an identical objective value as $(v, x, a)$ in (\ref{eqn:rp1}). \end{manuallemma}
}

{\color{blue}
Lemma \ref{lem:rp1-to-cbp2} can be proved by proving the following two lemmas: 
}

{\color{purple}
\begin{lemma}
\label{lem:rp1-to-cbp2-prep1}
Let $(v, x, a)$ be a feasible solution to (\ref{eqn:rp1}). Using $(v, x, a)$ as the input, the output from Algorithm \ref{algo:reduce} satisfies: 
\begin{align*}
y^{\mathcal{A}_{k}} \ \ &\in \ \ [0,1] \\
\hat{v}_{j}^{(k)} \ \ &= \ \ 
		\theta_{j} + \sum_{i \in \mathcal{J}} 
		\left[1 - \hat{a}_{i}^{(k)} \exp\left( \alpha_{i} - \beta_{i} x_{i} \right) \right] 
		\rho_{ij} \hat{v}_{i}^{(k)}
		& \forall \ j \in \mathcal{J} 
\end{align*}
at every step $k$. In addition, Algorithm \ref{algo:reduce} stops after at most $J+1$ iterations. 
\end{lemma}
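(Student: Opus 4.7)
The plan is to prove all three claims---that $y^{\mathcal{A}_k} \in [0,1]$, that the balance equation is preserved by the update, and that the algorithm terminates in at most $J+1$ iterations---by induction on the iteration index $k$. The base case $k=1$ is immediate: by feasibility of $(v,x,a)$ for (\ref{eqn:rp1}), the pair $(\hat{v}^{(1)}, \hat{a}^{(1)}) = (v, a)$ already satisfies the stated balance equation. For the inductive step, I would assume the two properties hold at iteration $k$ and that the algorithm has not stopped (so $y^{\mathcal{A}_k} < 1$), and verify them at iteration $k+1$.

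Preservation of the balance equation is handled by direct algebra. Starting from the two balance equations for $\hat{v}^{(k)}$ and $v^{\mathcal{A}_k}$, I subtract $y^{\mathcal{A}_k}$ times the latter from the former and divide both sides by $1 - y^{\mathcal{A}_k}$, which is legitimate because Step~4 is only reached when $y^{\mathcal{A}_k} < 1$. Matching the resulting coefficients of $\exp(\alpha_i - \beta_i x_i)$ with Step~4's definitions of $\hat{v}_j^{(k+1)}$ and $\hat{a}_j^{(k+1)}$ yields precisely the balance equation at iteration $k+1$; coordinates $i \notin \mathcal{A}_k$ cause no issue because $\hat{a}_i^{(k)} \hat{v}_i^{(k)} = 0 = \mathbf{1}(i \in \mathcal{A}_k)\, v_i^{\mathcal{A}_k}$, and the algorithm sets $\hat{a}_i^{(k+1)} = 0$ there.

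The main obstacle is showing $y^{\mathcal{A}_k} \leq 1$, which I would establish by contradiction. Nonnegativity of each ratio $\hat{a}_j^{(k)} \hat{v}_j^{(k)} / v_j^{\mathcal{A}_k}$ follows from the Neumann-series representation in Remark~\ref{rmk:balance-unique}, which makes all three factors nonnegative. Suppose for contradiction that $\hat{a}_j^{(k)} \hat{v}_j^{(k)} > v_j^{\mathcal{A}_k}$ for every $j \in \mathcal{A}_k$. Writing $\Delta := \hat{v}^{(k)} - v^{\mathcal{A}_k}$ and $\delta_i := \hat{a}_i^{(k)} \exp(\alpha_i - \beta_i x_i) \hat{v}_i^{(k)} - \mathbf{1}(i \in \mathcal{A}_k) \exp(\alpha_i - \beta_i x_i) v_i^{\mathcal{A}_k}$, subtraction of the two balance equations gives $(I - \rho^{\top}) \Delta = -\rho^{\top} \delta$, hence $\Delta = -(I - \rho^{\top})^{-1} \rho^{\top} \delta$. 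Since $\hat{a}^{(k)}$ is componentwise dominated by $\mathbf{1}_{\mathcal{A}_k}$, the Neumann-series monotonicity argument (using $\rho \geq 0$ and the nonsingularity of $I - \rho$) yields $\hat{v}^{(k)} \geq v^{\mathcal{A}_k}$, i.e.\ $\Delta \geq 0$ coordinatewise. But the contradiction hypothesis forces $\delta \geq 0$ (strictly on $\mathcal{A}_k$, zero elsewhere), so the same series gives $\Delta = -(I - \rho^{\top})^{-1} \rho^{\top} \delta \leq 0$. Thus $\Delta = 0$ and $\hat{v}^{(k)} = v^{\mathcal{A}_k}$; but then $\hat{a}_j^{(k)} \hat{v}_j^{(k)} = \hat{a}_j^{(k)} v_j^{\mathcal{A}_k} \leq v_j^{\mathcal{A}_k}$ because $\hat{a}_j^{(k)} \in [0,1]$, contradicting the assumption. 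Hence $y^{\mathcal{A}_k} \leq 1$.

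Termination is the easiest part. Whenever the algorithm proceeds past Step~3, some coordinate $j^{\star} \in \mathcal{A}_k$ attains the minimum, so $\hat{a}_{j^{\star}}^{(k)} \hat{v}_{j^{\star}}^{(k)} = y^{\mathcal{A}_k} v_{j^{\star}}^{\mathcal{A}_k}$, which makes the numerator of $\hat{a}_{j^{\star}}^{(k+1)}$ in Step~4 vanish and thus $j^{\star} \notin \mathcal{A}_{k+1}$. Combined with $\hat{a}_i^{(k+1)} = 0$ for $i \notin \mathcal{A}_k$, this gives $\mathcal{A}_{k+1} \subsetneq \mathcal{A}_k$, so $|\mathcal{A}_k|$ strictly decreases with each non-terminating iteration. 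Since $|\mathcal{A}_1| \leq J$, after at most $J$ such iterations $\mathcal{A}_k$ becomes empty and the algorithm stops in Step~1 at the next iteration, giving $K \leq J+1$.
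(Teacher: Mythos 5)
Your proposal is correct and follows essentially the same route as the paper's proof: induction on $k$, preservation of the balance equation by subtracting $y^{\mathcal{A}_k}$ times the balance system for $v^{\mathcal{A}_k}$ and dividing by $1-y^{\mathcal{A}_k}$, the bound $y^{\mathcal{A}_k}\leq 1$ by contradiction via nonnegativity of the Neumann series for $(I-\rho^{\top})^{-1}$, and termination from the strict inclusion $\mathcal{A}_{k+1}\subsetneq\mathcal{A}_k$. The only (harmless) deviation is that you additionally establish $\hat{v}^{(k)}\geq v^{\mathcal{A}_k}$ and conclude via $\Delta=0$, whereas the paper derives the contradiction directly from $\hat{v}_j^{(k)}\leq v_j^{\mathcal{A}_k}$ together with $\hat{a}_j^{(k)}\leq 1$.
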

}

\begin{proof}
First, consider the algorithm at step $1$. Since $(v, x, a)$ is a feasible solution to (\ref{eqn:rp1}), we have $\hat{v}_{j}^{(1)} = v_{j}$ and $\hat{a}_{j}^{(1)} = a_{j}$ for every $j \in \mathcal{J}$. That implies
\begin{align*}
\hat{v}_{j}^{(1)} \ \ &= \ \ 
		\theta_{j} + \sum_{i \in \mathcal{J}} 
		\left[1 - \hat{a}_{i}^{(1)} \exp\left( \alpha_{i} - \beta_{i} x_{i} \right) \right] \rho_{ij} \hat{v}_{i}^{(1)}
		& \forall \ j \in \mathcal{J} 
\end{align*}
In addition, we can show $y^{\mathcal{A}_{1}} \in [0,1]$ by contradiction. Clearly, $\hat{a}_{j}^{(1)} \hat{v}_{j}^{(1)} > 0$ and $v_{j}^{\mathcal{A}_{1}} > 0$ for every $j \in y^{\mathcal{A}_{1}}$. Thus, $y^{\mathcal{A}_{1}} > 0$. If $y^{\mathcal{A}_{1}} > 1$, then we must have $\mathcal{A}_{k} \neq \varnothing$, and $\hat{a}_{j}^{(1)} \hat{v}_{j}^{(1)} > v_{j}^{\mathcal{A}_{1}}$ for every $j \in y^{\mathcal{A}_{1}}$. However, since 
\begin{align*}
v_{j}^{\mathcal{A}_{1}}  
&\ \ = \ \ \theta_{j} + \sum_{i \in \mathcal{J}} 
		\left[1 - \mathbf{1}(i \in \mathcal{A}_{1}) \exp\left( \alpha_{i} - \beta_{i} x_{i} \right) \right] 
		\rho_{ij} v_{i}^{\mathcal{A}_{1}} 
		& \forall \ j \in \mathcal{J} 
\end{align*}
we have
\begin{align*}
\left( v_{j}^{\mathcal{A}_{1}} - \hat{v}_{j}^{(1)} \right)
& \ \ = \ \ \sum_{i \in \mathcal{J}} \rho_{ij} 
		\left( v_{i}^{\mathcal{A}_{1}} - \hat{v}_{i}^{(1)} \right)
		- \sum_{i \in \mathcal{A}_{1}} \exp\left( \alpha_{i} - \beta_{i} x_{i} \right) 
		\rho_{ij} \left( v_{i}^{\mathcal{A}_{1}} - \hat{a}_{i}^{(1)} \hat{v}_{i}^{(1)} \right)
		& \forall \ j \in \mathcal{J} 
\end{align*}
If $\mathcal{A}_{k} \neq \varnothing$, and $\hat{a}_{j}^{(1)} \hat{v}_{j}^{(1)} > v_{j}^{\mathcal{A}_{1}}$ for every $j \in y^{\mathcal{A}_{1}}$, then
\begin{align*}
\sum_{i \in \mathcal{A}_{1}} \exp\left( \alpha_{i} - \beta_{i} x_{i} \right) 
		\rho_{ij} \left( v_{i}^{\mathcal{A}_{1}} - \hat{a}_{i}^{(1)} \hat{v}_{i}^{(1)} \right)
\ \ \leq \ \ 0
\end{align*}
which implies $v_{j}^{\mathcal{A}_{1}} - \hat{v}_{j}^{(1)} \geq 0$ for every $j \in \mathcal{J}$ (recall that $(I - \rho)^{-1} = I + \sum_{n=1}^{\infty} (\rho)^{n} \geq 0$). That contradicts the fact that $\hat{a}_{j}^{(1)} = a_{j} \in [0,1]$ for every $j \in \mathcal{J}$. 

Thus, the conditions in Lemma \ref{lem:rp1-to-cbp2-prep1} hold for $\mathcal{A}_{1}$ and $\hat{v}^{(1)}$. 

Second, suppose that the conditions in Lemma \ref{lem:rp1-to-cbp2-prep1} hold for $\mathcal{A}_{k}$ and $\hat{v}^{(k)}$ at some $k \geq 1$, and the algorithm hasn't stop. 
We show that $\mathcal{A}_{k+1} \subseteq \mathcal{A}_{k}$. If $\mathcal{A}_{k} = \mathcal{J}$, then $\mathcal{A}_{k+1} \subseteq \mathcal{A}_{k}$ holds. 
Otherwise, consider any $j \in \mathcal{J} \setminus \mathcal{A}_{k}$. 
Then $\hat{a}_{i}^{(k+1)} = 0$, which means $\hat{a}_{i}^{(k+1)} \hat{v}_{i}^{(k+1)} = 0$. 
Thus, $j \notin \mathcal{A}_{k+1}$. 

Third, suppose that the condition in Lemma \ref{lem:rp1-to-cbp2-prep1} holds for $\mathcal{A}_{k}$ and $\hat{v}^{(k)}$ at some $k \geq 1$, and the algorithm hasn't stop. 
Since $\mathcal{A}_{k+1} \subseteq \mathcal{A}_{k}$, we have
{\scriptsize
\begin{align*}
\hat{v}_{j}^{(k+1)} 
\ \ &= \ \ \frac{\hat{v}_{j}^{(k)} - y^{\mathcal{A}_{k}} v_{j}^{\mathcal{A}_{k}}}
		{1 - y^{\mathcal{A}_{k}}} \\
\ \ &= \ \ \frac{\theta_{j} + \sum_{i \in \mathcal{J}} 
		\left[1 - \hat{a}_{i}^{(k)} \exp\left( \alpha_{i} - \beta_{i} x_{i} \right) \right] 
		\rho_{ij} \hat{v}_{i}^{(k)} 
		- y^{\mathcal{A}_{k}} v_{j}^{\mathcal{A}_{k}}}
		{1 - y^{\mathcal{A}_{k}}} \\
\ \ &= \ \ \frac{\theta_{j} + \sum_{i \in \mathcal{J}} \rho_{ij} \hat{v}_{i}^{(k)} 
		- \left(\sum_{i \in \mathcal{J}} \exp\left( \alpha_{i} - \beta_{i} x_{i} \right) 
		\rho_{ij} \left( \hat{a}_{i}^{(k)} \hat{v}_{i}^{(k)} \right) \right)
		- y^{\mathcal{A}_{k}} v_{j}^{\mathcal{A}_{k}}}
		{1 - y^{\mathcal{A}_{k}}} \\
\ \ &= \ \ \frac{\theta_{j} + \sum_{i \in \mathcal{J}} \rho_{ij} \hat{v}_{i}^{(k)} 
		- \left(\sum_{i \in \mathcal{A}_{k}} \exp\left( \alpha_{i} - \beta_{i} x_{i} \right) 
		\rho_{ij} \left( \hat{a}_{i}^{(k)} \hat{v}_{i}^{(k)} \right) \right)
		- y^{\mathcal{A}_{k}} v_{j}^{\mathcal{A}_{k}}}
		{1 - y^{\mathcal{A}_{k}}} \\
\ \ &= \ \ \frac{\theta_{j}}{1 - y^{\mathcal{A}_{k}}} 
		+ \sum_{i \in \mathcal{J}} \rho_{ij} 
		\left( \frac{\hat{v}_{i}^{(k)}}{1 - y^{\mathcal{A}_{k}}} \right) 
		- \left(\sum_{i \in \mathcal{A}_{k}} \exp\left( \alpha_{i} - \beta_{i} x_{i} \right) 
		\rho_{ij} \left( \frac{\hat{a}_{i}^{(k)} \hat{v}_{i}^{(k)}}{1 - y^{\mathcal{A}_{k}}} \right)\right)
		- \frac{y^{\mathcal{A}_{k}} v_{j}^{\mathcal{A}_{k}}}{1 - y^{\mathcal{A}_{k}}} \\
\ \ &= \ \ \frac{\theta_{j}}{1 - y^{\mathcal{A}_{k}}}
		+ \sum_{i \in \mathcal{J}} \rho_{ij} 
		\left(\frac{y^{\mathcal{A}_{k}} v^{\mathcal{A}_{k}}}{1 - y^{\mathcal{A}_{k}}} 
		+ \hat{v}_{i}^{(k+1)} \right)
		+ \left(\sum_{i \in \mathcal{A}_{k}} \exp\left( \alpha_{i} - \beta_{i} x_{i} \right) 
		\rho_{ij} \left(\frac{y^{\mathcal{A}_{k}} v^{\mathcal{A}_{k}}}{1 - y^{\mathcal{A}_{k}}} 
		+ \hat{a}_{i}^{(k+1)} \hat{v}_{i}^{(k+1)} \right)\right) 
		- \frac{y^{\mathcal{A}_{k}} v_{j}^{\mathcal{A}_{k}}}{1 - y^{\mathcal{A}_{k}}} \\
\ \ &= \ \ \frac{\theta_{j}}{1 - y^{\mathcal{A}_{k}}}
		- \left(\frac{y^{\mathcal{A}_{k}}}{1 - y^{\mathcal{A}_{k}}}\right) v^{\mathcal{A}_{k}}
		+ \left(\frac{y^{\mathcal{A}_{k}}}{1 - y^{\mathcal{A}_{k}}}\right) 
		\sum_{i \in \mathcal{J}} \left[1 - \mathbf{1}(i \in \mathcal{A}_{k}) 
		\exp\left( \alpha_{i} - \beta_{i} x_{i} \right) \right] 
		\rho_{ij} v_{i}^{\mathcal{A}_{k}} \\
\ \ &+ \ \ \sum_{i \in \mathcal{J}} \rho_{ij} \hat{v}_{i}^{(k+1)} 
		- \sum_{i \in \mathcal{A}_{k}} \exp\left( \alpha_{i} - \beta_{i} x_{i} \right) 
		\rho_{ij} \left( \hat{a}_{i}^{(k+1)} \hat{v}_{i}^{(k+1)} \right) \\
\ \ &= \ \ \frac{\theta_{j}}{1 - y^{\mathcal{A}_{k}}}
		- \frac{y^{\mathcal{A}_{k}}\theta_{j}}{1 - y^{\mathcal{A}_{k}}} 
		+ \sum_{i \in \mathcal{J}} 
		\left[1 - \hat{a}_{i}^{(k+1)} \exp\left( \alpha_{i} - \beta_{i} x_{i} \right) \right] 
		\rho_{ij} \hat{v}_{i}^{(k+1)} \\
\ \ &= \ \ \theta_{j} + \sum_{i \in \mathcal{J}} 
		\left[1 - \hat{a}_{i}^{(k+1)} \exp\left( \alpha_{i} - \beta_{i} x_{i} \right) \right] 
		\rho_{ij} \hat{v}_{i}^{(k+1)} 
\end{align*}
}%
for every $j \in \mathcal{J}$.

 With the same technique used in the first step of this proof, we can then show that $y^{\mathcal{A}_{k}} \in [0,1]$. Thus, the conditions in Lemma \ref{lem:rp1-to-cbp2-prep1} for $\mathcal{A}_{k+1}$ and $\hat{v}^{(k+1)}$. By induction, the conditions in Lemma \ref{lem:rp1-to-cbp2-prep1} holds for outputs of the algorithm at every step. 

Finally, recall that we have shown $\mathcal{A}_{k+1} \subseteq \mathcal{A}_{k}$ for $k = 1, 2, \cdots $. On the other hand, it is clear that $\mathcal{A}_{k+1} \neq \mathcal{A}_{k}$ as at least one of $j \in \mathcal{A}_{k}$ will be removed from $\mathcal{A}_{k+1}$. (That is, $\hat{a}_{j}^{(k)} \hat{v}_{j}^{(k)} = y^{\mathcal{A}_{k}} v_{j}^{\mathcal{A}_{k}}$ for some $j$ such that $y^{\mathcal{A}_{k}} = \min\{\hat{a}_{j}^{(k)} \hat{v}_{j}^{(k)} / v_{j}^{\mathcal{A}_{k}} : j \in \mathcal{A}_{k}\} = \hat{a}_{j}^{(k)} \hat{v}_{j}^{(k)} / v_{j}^{\mathcal{A}_{k}}$.) Thus, Algorithm \ref{algo:reduce} stops after at most $J+1$ iterations. 
\end{proof}

{\color{purple}
\begin{lemma}
\label{lem:rp1-to-cbp2-prep2}
Let $(v, x, a)$ be a feasible solution to (\ref{eqn:rp1}). 
Let $(\mathcal{A}_{k} : k = 1, \dots, K)$, $(v^{\mathcal{A}_{k}} : k = 1, \dots, K)$ and $(y^{\mathcal{A}_{k}} : k = 1, \dots, K)$ denote the output of Algorithm \ref{algo:reduce} using $(v, x, a)$ as the input, where $K$ is the number of iterations after which the algorithm stops. 
In addition, let 
\begin{align*}
w^{\mathcal{A}_{k}} &= (1 - y^{\mathcal{A}_1}) \cdots (1 - y^{\mathcal{A}_{k-1}}) y^{\mathcal{A}_{k}}
		\quad , \quad k = 1, \dots, K 
\end{align*}
let $w^{\mathcal{A}} = 0$ for any $\mathcal{A} \subseteq \mathcal{J}$ such that $\mathcal{A} \notin \{\mathcal{A}_1, \dots, \mathcal{A}_{k}\}$, and let $v^{\mathcal{A}}$ be the unique (see Remark \ref{rmk:balance-unique}) solution of the system
\begin{align*}
v_{j}^{\mathcal{A}}  
&\ \ = \ \ \theta_{j} + \sum_{i \in \mathcal{J}} 
		\left[1 - \mathbf{1}(i \in \mathcal{A}) \exp\left( \alpha_{i} - \beta_{i} x_{i} \right) \right] 
		\rho_{ij} v_{i}^{\mathcal{A}} 
		& \forall \ j \in \mathcal{J} 
\end{align*}
for every $\mathcal{A} \subseteq \mathcal{J}$ given the value of $x$. 
Then $(\mathbf{v}, x, w)$ is a feasible solution to (\ref{eqn:cbp2}) with an identical objective value as $(v, x, a)$ in (\ref{eqn:rp1}). 
\end{lemma}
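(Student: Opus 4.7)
The plan is to deduce feasibility of $(\mathbf{v}, x, w)$ for (\ref{eqn:cbp2}) and equality of its objective value with that of $(v, x, a)$ for (\ref{eqn:rp1}) by reading Step 4 of Algorithm \ref{algo:reduce} as a pair of convex-combination identities and telescoping them to express the aggregated quantities $v_{j}$ and $a_{j} v_{j}$ as weighted sums over the $v_{j}^{\mathcal{A}_{k}}$.

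First, I would rewrite Step 4 of Algorithm \ref{algo:reduce} in the equivalent reversed form
\begin{align*}
\hat{v}_{j}^{(k)} &\ \ = \ \ y^{\mathcal{A}_{k}} v_{j}^{\mathcal{A}_{k}} + (1 - y^{\mathcal{A}_{k}}) \hat{v}_{j}^{(k+1)} , \\
\hat{a}_{j}^{(k)} \hat{v}_{j}^{(k)} &\ \ = \ \ y^{\mathcal{A}_{k}} v_{j}^{\mathcal{A}_{k}} \mathbf{1}(j \in \mathcal{A}_{k}) + (1 - y^{\mathcal{A}_{k}}) \hat{a}_{j}^{(k+1)} \hat{v}_{j}^{(k+1)}
\end{align*}
for all $j \in \mathcal{J}$ and $1 \leq k < K$. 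The first identity is algebraic rearrangement of Step 4. The second follows by rearranging the Step 4 definition of $\hat{a}_{j}^{(k+1)}$ when $j \in \mathcal{A}_{k}$, and by noting that both sides vanish when $j \notin \mathcal{A}_{k}$, since $\mathcal{A}_{k}$ is defined as the support of $\hat{a}_{\cdot}^{(k)} \hat{v}_{\cdot}^{(k)}$ and Step 4 explicitly sets $\hat{a}_{j}^{(k+1)} = 0$ for such $j$.

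Second, I would handle the terminal iteration, where Algorithm \ref{algo:reduce} stops with $y^{\mathcal{A}_{K}} = 1$, by showing $\hat{v}_{j}^{(K)} = v_{j}^{\mathcal{A}_{K}}$ for every $j$. If termination occurs in Step 1 then $\mathcal{A}_{K} = \varnothing$ and $\hat{a}_{j}^{(K)} = 0$ for all $j$, so $\hat{v}^{(K)}$ and $v^{\mathcal{A}_{K}}$ solve the same balance system $v_{j} = \theta_{j} + \sum_{i} \rho_{ij} v_{i}$ and coincide by the uniqueness noted in Remark \ref{rmk:balance-unique}. If termination occurs in Step 3, the monotonicity argument used in the proof of Lemma \ref{lem:rp1-to-cbp2-prep1}, run in the reverse inequality direction, combined with the bound $\hat{a}_{j}^{(k)} \in [0,1]$ (which propagates by induction through Step 4 since $\hat{a}_{j}^{(k)} \hat{v}_{j}^{(k)} \leq \hat{v}_{j}^{(k)}$), forces $\hat{a}_{j}^{(K)} = 1$ for every $j \in \mathcal{A}_{K}$; then $\hat{v}^{(K)} - v^{\mathcal{A}_{K}}$ satisfies the homogeneous version of the balance equation and vanishes by uniqueness.

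Third, setting $p_{k} \defi \prod_{i=1}^{k}(1 - y^{\mathcal{A}_{i}})$ with $p_{0} = 1$, so that $w^{\mathcal{A}_{k}} = p_{k-1} y^{\mathcal{A}_{k}} = p_{k-1} - p_{k}$ and $p_{K} = 0$, I would unroll the two identities above to obtain
\begin{align*}
v_{j} \ \ = \ \ \hat{v}_{j}^{(1)} &\ \ = \ \ \sum_{k=1}^{K} w^{\mathcal{A}_{k}} v_{j}^{\mathcal{A}_{k}} \ \ = \ \ \sum_{\mathcal{A} \subseteq \mathcal{J}} w^{\mathcal{A}} v_{j}^{\mathcal{A}} , \\
a_{j} v_{j} \ \ = \ \ \hat{a}_{j}^{(1)} \hat{v}_{j}^{(1)} &\ \ = \ \ \sum_{\mathcal{A} \subseteq \mathcal{J}} \mathbf{1}(j \in \mathcal{A}) w^{\mathcal{A}} v_{j}^{\mathcal{A}}
\end{align*}
for every $j \in \mathcal{J}$, along with $\sum_{\mathcal{A}} w^{\mathcal{A}} = 1 - p_{K} = 1$; non-negativity $w \geq 0$ follows from Lemma \ref{lem:rp1-to-cbp2-prep1}. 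Combining these identities with the construction of $v^{\mathcal{A}}$ as the balance solution and the $x$-box constraints inherited from (\ref{eqn:rp1}) gives feasibility of $(\mathbf{v}, x, w)$ for (\ref{eqn:cbp2}); swapping summation order in the resource expression yields $\sum_{\mathcal{A}} \overline{\lambda} w^{\mathcal{A}} \sum_{j \in \mathcal{A}} \phi_{rj} \exp(\alpha_{j} - \beta_{j} x_{j}) v_{j}^{\mathcal{A}} = \overline{\lambda} \sum_{j} \phi_{rj} \exp(\alpha_{j} - \beta_{j} x_{j}) a_{j} v_{j} \leq \varphi_{r}$, and the same rearrangement with $(x_{j} - \psi_{j})$ in place of $\phi_{rj}$ equates the two objective values. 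The hardest step will be the terminal-iteration equivalence $\hat{v}^{(K)} = v^{\mathcal{A}_{K}}$ in the Step 3 case, which requires the extra monotonicity argument combined with a second invocation of uniqueness; the rest is bookkeeping on the telescoping.
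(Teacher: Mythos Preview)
Your approach is essentially the same as the paper's: both proofs read Step~4 of Algorithm~\ref{algo:reduce} as a recursion that telescopes to the key identity $a_{j} v_{j} = \sum_{\mathcal{A}} \mathbf{1}(j \in \mathcal{A}) w^{\mathcal{A}} v_{j}^{\mathcal{A}}$, and then substitute this identity into the resource and objective expressions. The paper's proof handles $\sum_{\mathcal{A}} w^{\mathcal{A}} = 1$ and $w \geq 0$ via Lemma~\ref{lem:rp1-to-cbp2-prep1}, exactly as you do, and verifies the remaining constraints by direct inspection.

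One genuine difference worth noting: your terminal-iteration analysis is \emph{more} careful than the paper's. The paper writes the telescoping as ``$\hat{a}_{j}^{(1)} \hat{v}_{j}^{(1)} = \cdots = \sum_{k=1}^{K} \mathbf{1}(j \in \mathcal{A}_{k}) w^{\mathcal{A}_{k}} v_{j}^{\mathcal{A}_{k}}$'' without spelling out why the final residual $p_{K-1} \hat{a}_{j}^{(K)} \hat{v}_{j}^{(K)}$ equals $w^{\mathcal{A}_{K}} \mathbf{1}(j \in \mathcal{A}_{K}) v_{j}^{\mathcal{A}_{K}}$. When termination occurs in Step~3 with $y^{\mathcal{A}_{K}} = 1$, the definition of $y^{\mathcal{A}_{K}}$ as a minimum only gives $\hat{a}_{j}^{(K)} \hat{v}_{j}^{(K)} \geq v_{j}^{\mathcal{A}_{K}}$ for $j \in \mathcal{A}_{K}$; your argument (running the Lemma~\ref{lem:rp1-to-cbp2-prep1} comparison in reverse, using $\hat{a}_{j}^{(K)} \leq 1$) is what actually forces equality. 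So your extra step is not a detour but closes a gap the paper leaves implicit.

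One minor remark: the companion identity $v_{j} = \sum_{\mathcal{A}} w^{\mathcal{A}} v_{j}^{\mathcal{A}}$ that you also derive is not needed anywhere in verifying feasibility for (\ref{eqn:cbp2}), since the balance constraints there hold for each $v^{\mathcal{A}}$ by construction. It is harmless to include, but you could drop it and with it the need to establish $\hat{v}^{(K)} = v^{\mathcal{A}_{K}}$ in full (only $\hat{a}_{j}^{(K)} \hat{v}_{j}^{(K)} = \mathbf{1}(j \in \mathcal{A}_{K}) v_{j}^{\mathcal{A}_{K}}$ is required for the telescope you actually use).
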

}

\begin{proof}
First, the box constraints of prices $x$ in (\ref{eqn:cbp2}) and (\ref{eqn:rp1}) are identical. Thus,  the box constraints in (\ref{eqn:cbp2}) hold at $(\mathbf{v}, x, w)$. In addition, $\mathbf{v}$ and $x$ satisfies the balance equations in (\ref{eqn:cbp2}) by construction. 
Second, we show that $w \geq 0$ and $\sum_{\mathcal{A} \subseteq \mathcal{J}} w^{\mathcal{A}} = 1$. 
By Lemma \ref{lem:rp1-to-cbp2-prep1}, Algorithm \ref{algo:reduce} stops after at most $J+1$ iterations. Thus, $K \leq J+1$ and $y^{\mathcal{A}_{K}} = 1$. In addition, $y^{\mathcal{A}_{k}} \in [0,1]$ for every $k = 1, \dots, K$. Thus, $w \geq 0$ by construction. We have: 
{\small
\begin{align*}
\sum_{\mathcal{A} \subseteq \mathcal{J}} w^{\mathcal{A}}
& \ \ = \ \ \sum_{k=1}^K w^{\mathcal{A}_{k}} \\
& \ \ = \ \ \sum_{k=1}^K 
		(1 - y^{\mathcal{A}_1}) \cdots (1 - y^{\mathcal{A}_{k-1}}) y^{\mathcal{A}_{k}} \\
& \ \ = \ \ (1 - y^{\mathcal{A}_1}) \cdots (1 - y^{\mathcal{A}_{K-1}}) y^{\mathcal{A}_{K}}
		+ \sum_{k=1}^{K-1} 
		(1 - y^{\mathcal{A}_1}) \cdots (1 - y^{\mathcal{A}_{k-1}}) y^{\mathcal{A}_{k}} \\
& \ \ = \ \ (1 - y^{\mathcal{A}_1}) \cdots (1 - y^{\mathcal{A}_{K-1}}) + \sum_{k=1}^{K-1} 
		(1 - y^{\mathcal{A}_1}) \cdots (1 - y^{\mathcal{A}_{k-1}}) y^{\mathcal{A}_{k}} \\
& \ \ = \ \ (1 - y^{\mathcal{A}_1}) \cdots (1 - y^{\mathcal{A}_{K-2}}) + \sum_{k=1}^{K-2} 
		(1 - y^{\mathcal{A}_1}) \cdots (1 - y^{\mathcal{A}_{k-1}}) y^{\mathcal{A}_{k}} \\
& \ \ = \ \ \cdots \\
& \ \ = \ \ 1 
\end{align*}
}%
Third, we show that the resource constriaints in (\ref{eqn:cbp2}) still hold. We have: 
\begin{align*}
a_{j} v_{j} 
& \ \ = \ \ \hat{a}_{j}^{(1)} \hat{v}_{j}^{(1)} \\
& \ \ = \ \ (1-y^{\mathcal{A}_1}) \hat{a}_{j}^{(2)} \hat{v}_{j}^{(2)} - y^{\mathcal{A}_1} v_{j}^{\mathcal{A}_1} \\
& \ \ = \ \ \cdots \\
& \ \ = \ \ \sum_{k=1}^K  
		\mathbf{1}(j \in \mathcal{A}_{k}) w^{\mathcal{A}_{k}} v_{j}^{\mathcal{A}_{k}} 
	\ \ = \ \ \sum_{\mathcal{A} \subseteq \mathcal{J}} 
		\mathbf{1}(j \in \mathcal{A}) w^{\mathcal{A}} v_{j}^{\mathcal{A}} 
\end{align*}
which implies
{\small
\begin{align*}
& \ \ \sum_{\mathcal{A} \subseteq \mathcal{J}} \overline{\lambda} w^{\mathcal{A}} 
		\left(\sum_{j \in \mathcal{A}} \phi_{rj} \exp\left( \alpha_{j} - \beta_{j} x_{j} \right) 
		v_{j}^{\mathcal{A}} \right) \\
& \ \ = \ \ \sum_{\mathcal{A} \subseteq \mathcal{J}} \overline{\lambda} w^{\mathcal{A}} 
		\left(\sum_{j \in \mathcal{J}} \phi_{rj} \exp\left( \alpha_{j} - \beta_{j} x_{j} \right) 
		\mathbf{1}(j \in \mathcal{A}) v_{j}^{\mathcal{A}} \right) \\
& \ \ = \ \ \overline{\lambda} 
		\left( \sum_{j \in \mathcal{J}} \phi_{rj} \exp\left( \alpha_{j} - \beta_{j} x_{j} \right) 
		\left( \sum_{\mathcal{A} \subseteq \mathcal{J}} \mathbf{1}(j \in \mathcal{A}) 
		w^{\mathcal{A}} v_{j}^{\mathcal{A}} \right) \right) \\
& \ \ = \ \ \overline{\lambda} 
		\left( \sum_{j \in \mathcal{J}} \phi_{rj} \exp\left( \alpha_{j} - \beta_{j} x_{j} \right) 
		\left( a_{j} v_{j} \right) \right) \\
& \ \ = \ \ \overline{\lambda} \left(\sum_{j \in \mathcal{J}} \phi_{rj} 
		a_{j} \exp\left( \alpha_{j} - \beta_{j} x_{j} \right) v_{j} \right) \\
& \ \ \leq \ \ \varphi_{r}
\end{align*}
}%
Thus, $(\mathbf{v}, x, w)$ is a feasible solution to (\ref{eqn:cbp2}). 
Finally, with the same technique used in the third step, we can show that 
(\ref{eqn:cbp2}) has an identical objective value at $(\mathbf{v}, x, w)$ as (\ref{eqn:rp1}) at $(v, x, a)$. 
\end{proof}

 \newpage


\subsection{Proof of Lemma \ref{lem:rp2 optimal}}

{\color{purple}
\begin{manuallemma}{\ref{lem:rp2 optimal}}
If (\ref{eqn:rp1}) is feasible, then (\ref{eqn:rp2}) has an optimal solution.
\end{manuallemma}
}

\begin{proof}
Let 

Let $(v,x,a)$ be a feasible solution to (\ref{eqn:rp1}). Let 
\begin{align*}
u_{j} \ \ &= \ \ x_{j} d_{j}
		& \forall \ j \in \mathcal{J} \\
d_{j} \ \ &= \ \ a_{j} \exp\left( \alpha_{j} - \beta_{j} x_{j} \right) v_{j}
		& \forall \ j \in \mathcal{J} 
\end{align*}
Then $(v,d,u)$ is clearly a feasible solution to (\ref{eqn:rp2}). 
Thus, (\ref{eqn:rp2}) is feasible if (\ref{eqn:rp1}) is feasible

Now, since (\ref{eqn:rp2}) is feasible, the dual of (\ref{eqn:rp2}) is bounded.
Note that (\ref{eqn:rp2}) can be obtained by adding resource constraints to (\ref{eqn:sp3}), and then multiply its objective function by a scalar. 
Thus, the dual of (\ref{eqn:rp2}) has additional signed variables beyond (\ref{eqn:sd1}).
Then a feasible solution of the dual of (\ref{eqn:sp3}) in the interior of the dual cones, combined with these additional signed variables set to $0$, gives a feasible solution of the dual of (\ref{eqn:rp2}) in the interior of the dual cones.
Then it follows from the conic duality theorem (Theorem 1.4.2 in \cite{ben2001lectures}) that (\ref{eqn:rp2}) has an optimal solution.
\end{proof}

 \end{appendix}


\end{document}